\documentclass[final,3p,times]{elsarticle}

\usepackage{amsmath,amssymb,amsthm,mathtools,bm}
\usepackage{booktabs,array,longtable,multirow}
\usepackage{graphicx}
\usepackage{placeins}
\usepackage{float}
\usepackage{subcaption}
\usepackage{microtype}
\usepackage{enumitem}
\usepackage{algorithm}
\usepackage{algpseudocode}
\usepackage[colorlinks=true,linkcolor=blue,citecolor=blue,urlcolor=blue]{hyperref}
\usepackage[nameinlink,noabbrev]{cleveref}

\allowdisplaybreaks
\newtheorem{theorem}{Theorem}[section]
\newtheorem{lemma}[theorem]{Lemma}

\theoremstyle{definition}

\theoremstyle{remark}

\crefname{corollary}{corollary}{corollaries}
\Crefname{corollary}{Corollary}{Corollaries}

\newcommand{\dt}{\Delta t}
\newcommand{\Lop}{\mathcal L}
\newcommand{\Lt}{\dot{\mathcal L}}

\newcommand{\one}{\bm 1}
\newcommand{\es}{\bm e_s}

\newcommand{\RR}{\mathbb R}
\newcommand{\PP}{\mathbb P}
\newcommand{\OO}{\mathcal O}

\newcommand{\lc}{\operatorname{lc}}
\newcommand{\coef}{\operatorname{coef}}
\newcommand{\Ttr}{\mathcal T}

\journal{Applied Numerical Mathematics}

\begin{document}
\begin{frontmatter}

\title{Arbitrary-Order Pad\'e-Closed Anchored Two-Derivative Time Discretizations:\\ $s$ Active Stages, Order $2s$, and $L$-Stability}

\author[hpu,bit]{Zhixin Huo\corref{cor1}}
\ead{zhixinhuo@hpu.edu.cn}
\cortext[cor1]{Corresponding author}
\address[hpu]{School of Mathematics and Information Science, Henan Polytechnic University, Jiaozuo, Henan 454003, China}
\address[bit]{School of Mechatronical Engineering, Beijing Institute of Technology, Beijing 100081, China}

\begin{abstract}
An arbitrary-order family of implicit two-derivative one-step methods is
constructed in an anchored active-stage formulation.  At each information
node the method uses both the vector field and its first total time
derivative, enriching the local Hermite data without increasing the number of
unknown stage states.  With the known initial value retained as an anchor and
$s$ unknown active stages, $2s$ Hermite moment conditions yield global order
$2s$.  The two remaining coefficients in each stage row are fixed by the
second-subdiagonal Pad\'{e} approximant $[s-1/s+1]_{e^z}$.  For every ordered
real node set, a Pad\'{e}--Hermite basis theorem proves that the closure is
unique, preserves all moment conditions, and gives
$\det(I-zA-z^2\widehat A)=Q_s(z)$ and
$R_s(z)=P_s(z)/Q_s(z)$.  Hence the coupled stage system has no hidden poles
and the accepted one-step map is $L$-stable (and therefore $A$-stable) for
every positive integer $s$.  Exact symbolic verification is reported through $s=6$, and
high-precision computations confirm orders $2,4,6,$ and $8$ for the first
four members.  At equal active-stage count, comparisons with Gauss--Legendre
and Radau IIA methods demonstrate the combined high-order accuracy and strong
stiff damping of the construction over broad step-size ranges.
\end{abstract}

\begin{keyword}
multiderivative time integration \sep two-derivative method \sep Hermite anchor
\sep Pad\'{e} approximation \sep arbitrary order \sep $L$-stability
\end{keyword}

\end{frontmatter}

\section{Introduction}
\label{sec:introduction}

High-order time integration can be obtained either by introducing more
solution stages or by enriching the information carried by each stage.
Two-derivative methods follow the second strategy: besides the vector field
$\Lop(u)$, they use its first total time derivative
$\Lt(u)=d\Lop(u)/dt$.  Classical multiple-node and multiderivative formulas
were studied in \cite{KastlungerWanner1972,HairerWanner1973}; modern work
includes explicit, implicit, strong-stability-preserving, and IMEX
constructions
\cite{ChanTsai2010,SealGucluChristlieb2014,ChristliebEtAl2016,GottliebGrantHuShu2022,KalogiratouEtAl2023}.
Second-derivative general linear methods have been studied from the viewpoints
of maximal order, Runge--Kutta stability, and nonlinear stability
\cite{AbdiHojjati2011,AkbariHojjatiAbdi2023}, while recent implicit
multiderivative Runge--Kutta work highlights the implementation and
conditioning issues associated with higher derivative information
\cite{ChouchoulisSchuetz2024}.  Related second-derivative peer constructions
provide another multivalue route to high order and stiff stability
\cite{SharifiEtAl2026}.  General background on one-step, multistep, general
linear, and stiff integration methods can be found in
\cite{HairerNorsettWanner1993,HairerWanner1996,Butcher2016,Jackiewicz2009},
while early second-derivative formulas for stiff systems were developed by
Enright and Cash \cite{Enright1974,Cash1981}.

An important motivation comes from two-stage fourth-order Lax--Wendroff-type
time discretizations \cite{LiDu2016,YuanTang2023}.  In that setting, the known
state at the beginning of the step is input data rather than an unknown
stage.  This observation motivates the active-stage convention used here:
$Y_0=u^n$ is a known Hermite anchor and only $Y_1,\ldots,Y_s$ are counted as
active stages.  The method therefore solves for exactly $s$ unknown stage
states, although it uses information at $s+1$ nodes.  Each node contributes a
value--derivative pair, so the information available to the order conditions
is enriched without doubling the nonlinear stage vector.  The resulting
compactness is understood in this information-per-unknown sense: with $s$
unknown states the construction realizes order $2s$ together with
$L$-stability.  Among classical $s$-stage collocation methods based only on
$\Lop$, Gauss--Legendre has order $2s$ but is not $L$-stable, whereas Radau IIA
is $L$-stable but has order $2s-1$.

For stiff problems, the central objective is to combine high order with a
one-step stability function that is available throughout the entire family.
The present paper addresses the constructive problem
\[
  \boxed{\begin{gathered}
  s\text{ active unknown stages}\quad\Longrightarrow\quad
  \text{global order }2s,\\
  \text{with }L\text{-stability for every }s.
  \end{gathered}}
\]
Throughout this paper, $L$-stability is used in its standard one-step sense:
its stability function is $A$-stable and tends to zero for infinitely stiff
modes.  The stability function refers to the accepted map
$u^n\mapsto u^{n+1}$; this terminology does not assert contractivity or
uniform boundedness of the internal stages.

The initial anchor and the $s$ active nodes provide $s+1$ value--derivative
pairs.  Consequently, each stage row contains $2s+2$ coefficients.  The first
$2s$ Hermite moments impose global order $2s$ and leave two coefficients per
row.  The central constructive problem is to use these remaining degrees of
freedom to realize a prescribed stable rational function while preserving the
full set of moment equations and avoiding hidden poles in the coupled stage
system.

The target is the second-subdiagonal Pad\'{e} approximant
\[
  [s-1/s+1]_{e^z}.
\]
The first and second subdiagonals of the exponential Pad\'{e} table are
analytic and bounded by one in the left half-plane \cite{Ehle1973}; related
order-star and generalized Pad\'{e} stability results appear in
\cite{HairerNorsettWanner1978,Butcher2002}.  The nontrivial issue is not the
stability of the target rational function, but its exact realization within
the anchored moment family for arbitrary nodes.  This is achieved by proving
that the truncated products $Q_s(z)e^{c_jz}$ generate a polynomial basis.

The main contributions are as follows.
\begin{enumerate}[label=(\roman*)]
  \item The anchored formulation contains exactly $s$ unknown active stages,
  while the known value $Y_0=u^n$ acts only as a Hermite anchor.
  \item The $2s$ Hermite moment equations imply local defect
  $\mathcal O(\Delta t^{2s+1})$ and global order $2s$ for sufficiently small
  step sizes under standard smoothness assumptions.
  \item For every ordered real node set
  $0=c_0<c_1<\cdots<c_s=1$, a Pad\'{e}--Hermite basis theorem gives a unique
  constructive closure of the two free coefficients in every stage row.
  \item The coupled stage determinant is exactly $Q_s$, so the realization
  introduces no hidden stage poles.
  \item The endpoint stability function is exactly
  $[s-1/s+1]_{e^z}$; hence the family is $L$-stable (and therefore
  $A$-stable) for every positive integer $s$.
  \item Exact symbolic verification, high-precision order tests, and
  fixed-stage comparisons with Gauss--Legendre and Radau IIA methods confirm
  the algebraic construction and illustrate the combined order $2s$ and
  $L$-stability at the same active-stage count.
\end{enumerate}

The numerical comparisons use the two closest classical collocation
benchmarks.  An $s$-stage Gauss--Legendre method has order $2s$ and is
$A$-stable but not $L$-stable, whereas an $s$-stage Radau IIA method is
$L$-stable (and hence $A$-stable) but has order $2s-1$
\cite{HairerWanner1996}.  The present
family is designed to occupy the missing corner of this comparison: order
$2s$ together with $L$-stability, using $s$ active states and first
total-derivative information.

The construction separates accuracy, stability, and node placement.  Once the
Pad\'{e} closure is imposed, scalar endpoint stability no longer depends on
the active-node locations; the nodes may therefore be selected according to
conditioning, stage placement, or nonlinear-solver considerations.

The remainder of the paper is organized as follows.  \Cref{sec:family}
introduces the method and its Hermite moment conditions, and
\Cref{sec:convergence,sec:stability-background} establish order $2s$ and the
Pad\'{e} stability target.  \Cref{sec:pade-closure} proves the unique closure,
the exact stage-determinant identity, and $L$-stability for every $s$.
\Cref{sec:computation,sec:sequential,sec:first-four} address coefficient
construction, the relation to sequential schemes, and the first four members.
\Cref{sec:numerics} presents symbolic, accuracy, and stiff-problem tests.
Finally, \Cref{sec:discussion,sec:conclusions} discuss practical considerations
and conclude the paper.

\section{The anchored two-derivative family}
\label{sec:family}

Consider the autonomous initial-value problem
\begin{equation}
  u_t=\Lop(u),
  \qquad u(t_0)=u_0,
  \qquad u(t)\in\RR^d,
  \label{eq:ivp}
\end{equation}
where $\Lop$ is sufficiently smooth.  Its first total time derivative along a
solution is
\begin{equation}
  \Lt(u)=\frac{d}{dt}\Lop(u)=\Lop_u(u)\Lop(u).
  \label{eq:Ldot}
\end{equation}
For a nonautonomous problem $u_t=\Lop(t,u)$, one replaces
\eqref{eq:Ldot} by
\[
  \Lt(t,u)=\Lop_t(t,u)+\Lop_u(t,u)\Lop(t,u).
\]

Let $t_{n+1}=t_n+\dt$.  The known initial state is
\begin{equation}
  Y_0=u^n,
  \qquad c_0=0,
  \label{eq:anchor}
\end{equation}
and is called the \emph{initial Hermite anchor}.  Choose $s$ active nodes
\begin{equation}
  0<c_1<c_2<\cdots<c_s=1,
  \label{eq:nodes}
\end{equation}
with
\[
  Y_i\approx u(t_n+c_i\dt),
  \qquad i=1,\ldots,s.
\]
The general anchored method is
\begin{equation}
  Y_i
  =Y_0
  +\dt\sum_{j=0}^s a_{ij}\Lop(Y_j)
  +\dt^2\sum_{j=0}^s \widehat a_{ij}\Lt(Y_j),
  \qquad i=1,\ldots,s,
  \label{eq:anchored-method}
\end{equation}
with endpoint update
\begin{equation}
  u^{n+1}=Y_s.
  \label{eq:update}
\end{equation}
All $Y_1,\ldots,Y_s$ are active stages; $Y_0$ is known input data.

\subsection{Derivation of the Hermite moment conditions}

The moment conditions are obtained by matching the Taylor expansion of the
exact solution at every active stage.  Let
\[
  U_j=u(t_n+c_j\dt),
  \qquad j=0,\ldots,s,
\]
denote the exact solution values at the information nodes.  Along the exact
solution,
\[
  \Lop(U_j)=u'(t_n+c_j\dt),
  \qquad
  \Lt(U_j)=u''(t_n+c_j\dt).
\]
After inserting the exact stage values into the $i$th row of
\eqref{eq:anchored-method}, define the corresponding stage defect by
\begin{equation}
\begin{aligned}
  d_i={}&U_i-U_0
  -\dt\sum_{j=0}^s a_{ij}u'(t_n+c_j\dt)
  -\dt^2\sum_{j=0}^s\widehat a_{ij}u''(t_n+c_j\dt).
\end{aligned}
\label{eq:defect}
\end{equation}
Taylor expansion about $t_n$ gives
\begin{equation}
  U_i-U_0
  =\sum_{m=1}^{2s}
    \frac{c_i^m}{m!}\dt^m u^{(m)}(t_n)
  +\OO(\dt^{2s+1}),
  \label{eq:taylor-state}
\end{equation}
\begin{equation}
  \dt\,u'(t_n+c_j\dt)
  =\sum_{m=1}^{2s}
    \frac{c_j^{m-1}}{(m-1)!}\dt^m u^{(m)}(t_n)
  +\OO(\dt^{2s+1}),
  \label{eq:taylor-first-derivative}
\end{equation}
and
\begin{equation}
  \dt^2u''(t_n+c_j\dt)
  =\sum_{m=2}^{2s}
    \frac{c_j^{m-2}}{(m-2)!}\dt^m u^{(m)}(t_n)
  +\OO(\dt^{2s+1}).
  \label{eq:taylor-second-derivative}
\end{equation}
Substitution of \eqref{eq:taylor-state}--\eqref{eq:taylor-second-derivative}
into \eqref{eq:defect} yields
\begin{equation}
\begin{aligned}
  d_i
  ={}&\sum_{m=1}^{2s}
  \Bigg[
    \frac{c_i^m}{m!}
    -\frac{1}{(m-1)!}\sum_{j=0}^s a_{ij}c_j^{m-1} 
    -\frac{1}{(m-2)!}\sum_{j=0}^s
      \widehat a_{ij}c_j^{m-2}
  \Bigg]\dt^m u^{(m)}(t_n)
  +\OO(\dt^{2s+1}),
\end{aligned}
\label{eq:defect-expansion}
\end{equation}
where the term containing $(m-2)!$ is absent for $m=1$.  To obtain
$d_i=\OO(\dt^{2s+1})$, the coefficient of every
$\dt^m u^{(m)}(t_n)$ must vanish for $m=1,\ldots,2s$.  Multiplying the
resulting identity by $(m-1)!$ gives the Hermite moment conditions
\begin{equation}
  \boxed{
  \sum_{j=0}^s a_{ij}c_j^{m-1}
  +(m-1)\sum_{j=0}^s\widehat a_{ij}c_j^{m-2}
  =\frac{c_i^m}{m},
  \qquad m=1,\ldots,2s.
  }
  \label{eq:moments}
\end{equation}
For $m=1$, the second sum is understood to be zero.  Thus,
\eqref{eq:moments} is precisely the condition that the stage formula reproduce
the Taylor expansion of the exact solution through degree $2s$.  Every row
contains the $2s+2$ coefficients
\[
  a_{i0},\ldots,a_{is},
  \qquad
  \widehat a_{i0},\ldots,\widehat a_{is}.
\]
Consequently, the $2s$ moment equations leave two degrees of freedom per row
whenever the moment matrix has full row rank.  The Pad\'e closure developed
below proves solvability and uniquely fixes these two remaining degrees of
freedom.

\section{Consistency, local solvability, and convergence}
\label{sec:convergence}

The preceding Taylor calculation immediately gives the following stage
consistency result.

\begin{theorem}[Stage consistency]
\label{thm:stage-consistency}
If the coefficients satisfy \eqref{eq:moments} and the exact solution has
$2s+1$ bounded time derivatives on the step, then
\[
  d_i=\OO(\dt^{2s+1}),
  \qquad i=1,\ldots,s.
\]
\end{theorem}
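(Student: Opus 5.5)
The plan is to make the Taylor calculation of \Cref{sec:family} rigorous by controlling the remainders explicitly, and then to observe that the moment conditions \eqref{eq:moments} remove every term below order $2s+1$. Fix a row $i$. Assume $u\in C^{2s+1}$ on $[t_n,t_n+\dt]$ with $M:=\max_{t\in[t_n,t_n+\dt]}\|u^{(2s+1)}(t)\|$ finite. Since $0\le c_j\le 1$, every evaluation point $t_n+c_j\dt$ lies in this interval, so all three expansions use the same interval and the same bound $M$.

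\textbf{Step 1: remainder estimates.}
\begin{itemize}
\item For the state, apply Taylor's theorem with integral (or componentwise Lagrange) remainder to $U_i-U_0$ at order $2s$. This gives \eqref{eq:taylor-state} with remainder bounded by $M c_i^{2s+1}\dt^{2s+1}/(2s+1)!$.
\item For the first derivative, expand $u'$ about $t_n$ to order $2s-1$, so that $u'$ has $2s$ terms and its remainder involves $u^{(2s+1)}$. Multiplying by $\dt$ gives \eqref{eq:taylor-first-derivative} with remainder at most $M c_j^{2s}\dt^{2s+1}/(2s)!$.
\item For the second derivative, expand $u''$ to order $2s-2$. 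Multiplying by $\dt^2$ gives \eqref{eq:taylor-second-derivative} with remainder at most $M c_j^{2s-1}\dt^{2s+1}/(2s-1)!$.
\end{itemize}
For vector-valued $u$, I would use the integral form of the remainder, so that no mean-value point is needed.

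\textbf{Step 2: collect the expansion.} Substitute the three expansions into \eqref{eq:defect}. The coefficients $a_{ij}$ and $\widehat a_{ij}$ are fixed numbers independent of $\dt$. Hence the total remainder is bounded by
\[
C\,M\,\dt^{2s+1},\qquad C=\frac{1}{(2s+1)!}+\sum_{j}\frac{|a_{ij}|}{(2s)!}+\sum_j\frac{|\widehat a_{ij}|}{(2s-1)!}.
\]
This yields \eqref{eq:defect-expansion} with a uniform $\OO(\dt^{2s+1})$ term.

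\textbf{Step 3: use the moment conditions.} The bracket multiplying $\dt^m u^{(m)}(t_n)$ in \eqref{eq:defect-expansion} equals $\frac{1}{(m-1)!}$ times the difference of the two sides of \eqref{eq:moments}, so it vanishes for $m=1,\ldots,2s$. Two conventions need care here.
\begin{itemize}
\item For $m=1$, the $\widehat a$ term is absent. This matches the convention in \eqref{eq:moments}.
\item For $m\ge2$ and $j=0$, the term $c_0^{m-2}$ with $m=2$ is $0^0$, which must be read as $1$. This is consistent with $\dt^2u''(t_n)$ contributing at $m=2$.
\end{itemize}
Thus $d_i$ reduces to the remainder, and $\|d_i\|\le CM\dt^{2s+1}$ for $i=1,\ldots,s$.

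\textbf{Main obstacle.} There is no deep difficulty. The only points needing care are the bookkeeping and the hypotheses.
\begin{itemize}
\item Each expansion must be truncated at the correct order, so that all three remainders involve exactly $u^{(2s+1)}$. This matches the hypothesis of $2s+1$ bounded derivatives, rather than requiring more smoothness.
\item The $0^0$ convention at the anchor node $c_0=0$ must be handled as above.
\item The $\OO$ constant must be independent of $\dt$ and $n$. This holds because the coefficients are fixed and the derivative bound is uniform on the step.
\end{itemize}
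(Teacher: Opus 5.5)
Your proposal is correct and takes the same route as the paper. The paper's proof invokes the expansion \eqref{eq:defect-expansion} and observes that the moment conditions \eqref{eq:moments} cancel every coefficient of $\dt^m u^{(m)}(t_n)$ for $m=1,\ldots,2s$; your version makes the three Taylor remainders explicit, each controlled by $u^{(2s+1)}$ and using the $0^0=1$ convention at $c_0=0$, which elaborates that argument rather than replacing it.
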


\begin{proof}
By \eqref{eq:defect-expansion}, the moment conditions cancel all terms of
orders $\dt,\ldots,\dt^{2s}$.  The first uncancelled term is therefore of
order $\dt^{2s+1}$.
\end{proof}

\begin{theorem}[Local solvability and global order]
\label{thm:convergence}
Assume that $\Lop$ and $\Lt$ are sufficiently smooth in a neighborhood of the
exact solution and that the exact solution has $2s+1$ bounded time
derivatives on $[0,T]$.  For all sufficiently small $\dt$, the stage system
\eqref{eq:anchored-method} has a locally unique solution branch satisfying
$Y_i\to u^n$ as $\dt\to0$.  A step started from exact data satisfies
\[
  u^{n+1}-u(t_{n+1})=\OO(\dt^{2s+1}),
\]
and, for exact initial data,
\[
  \max_{0\le n\le T/\dt}\|u^n-u(t_n)\|=\OO(\dt^{2s}).
\]
\end{theorem}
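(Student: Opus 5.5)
The proof has three parts: local solvability by the implicit function theorem, the local error by comparing the numerical stages with the exact defects of \Cref{thm:stage-consistency}, and global convergence by the usual Lipschitz-stability plus error-accumulation argument for one-step methods.

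\textbf{Local solvability.} Stack the unknowns as $Y=(Y_1,\ldots,Y_s)\in\RR^{sd}$ and write \eqref{eq:anchored-method} as $Y=\one\otimes u^n+\Phi(Y;u^n,\dt)$, where $\Phi$ collects the $\dt\,a_{ij}\Lop(Y_j)$ and $\dt^2\widehat a_{ij}\Lt(Y_j)$ terms. The $j=0$ terms depend only on the known anchor $u^n$. Let $B$ be a neighborhood of the exact solution on which $\Lop$ and $\Lt$ are $C^1$ with bounded derivatives; let $L_1$ bound $\Lop_u$ and $L_2$ bound the derivative of $\Lt$ there. Then
\[
\|\Phi(Y)-\Phi(Z)\|\le(\dt\,\|A\|L_1+\dt^2\|\widehat A\|L_2)\,\|Y-Z\|,
\]
so for $\dt$ small the map $Y\mapsto \one\otimes u^n+\Phi(Y)$ is a contraction. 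This holds on a closed ball of radius $O(\dt)$ around $\one\otimes u^n$, which it maps into itself because $\|\Phi(\one\otimes u^n)\|=O(\dt)$. Banach's fixed point theorem then gives a unique solution in that ball with $Y_i\to u^n$. Alternatively, apply the implicit function theorem to $F(Y,\dt)=Y-\one\otimes u^n-\Phi$ at $(\one\otimes u^n,0)$, where $\partial_YF=I$. The contraction version is preferable because its bounds are uniform in $u^n$ near the trajectory, which the global step needs.

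\textbf{Local error.} Start from exact data, $u^n=u(t_n)$, and let $U=(U_1,\ldots,U_s)$ be the exact nodal values. By definition \eqref{eq:defect}, $U=\one\otimes U_0+\Phi(U)+d$, where $d=(d_1,\ldots,d_s)=\OO(\dt^{2s+1})$ by \Cref{thm:stage-consistency}. Subtracting the numerical fixed-point equation gives
\[
Y-U=\Phi(Y)-\Phi(U)-d.
\]
The Lipschitz bound yields $(1-C\dt)\|Y-U\|\le\|d\|$, hence $\|Y-U\|=\OO(\dt^{2s+1})$. Since $u^{n+1}=Y_s$ and $U_s=u(t_{n+1})$ (because $c_s=1$), this is exactly the stated local error.

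One point must be checked here. Both $U$ and $Y$ must lie in the ball where the contraction argument applies. This holds because $U-\one\otimes U_0=O(\dt)$.

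\textbf{Global order.} The increment function $\psi(u^n)=Y_s(u^n)-u^n$ is Lipschitz in $u^n$ with constant $1+C\dt$. To see this, differentiate the fixed-point relation: $\partial Y/\partial u^n=(I-\partial_Y\Phi)^{-1}(\one\otimes I+\partial_{u^n}\Phi)$, with $\partial\Phi=O(\dt)$. The standard Lady Windermere's fan argument (Hairer--Nørsett--Wanner) then sums the local errors $\OO(\dt^{2s+1})$ over $T/\dt$ steps, each amplified by at most $e^{CT}$, giving a global error of $\OO(\dt^{2s})$.

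\textbf{Main obstacle.} The step needing the most care is keeping the numerical solution inside the tube around the exact trajectory, where the Lipschitz constants are valid. I would handle this by induction on $n$: the global error bound $Ce^{CT}\dt^{2s}$ stays below the tube radius for small $\dt$, so each $u^n$ lies in the region where the solvability and Lipschitz estimates hold.
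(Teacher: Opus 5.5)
Your proposal is correct and follows the paper's argument in all essentials. Local solvability near $\one\otimes u^n$ comes first; you use a uniform contraction, while the paper uses the implicit function theorem with uniform bounds on a compact neighborhood. Then the stage error is bounded by the $\OO(\dt^{2s+1})$ defects through the $I+\OO(\dt)$ Jacobian, the one-step map is shown to be $(1+C\dt)$-Lipschitz, and Gronwall or Lady Windermere summation with a continuation argument finishes the proof. One small slip: it is the one-step map $u^n\mapsto Y_s(u^n)$, not the increment $Y_s(u^n)-u^n$, that has Lipschitz constant $1+C\dt$; the increment has constant $C\dt$, and your derivative formula in fact shows the correct statement.
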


\begin{proof}
Set $p=2s$ and collect the active stages in
$Y=(Y_1,\ldots,Y_s)^T\in(\RR^d)^s$.  For an anchor value $v$ and step size
$h\ge0$, define the residual map componentwise by
\begin{equation}
\begin{aligned}
  F_i(Z;v,h)
  ={}&Z_i-v
  -h\left(a_{i0}\Lop(v)+\sum_{j=1}^s a_{ij}\Lop(Z_j)\right)
-h^2\left(\widehat a_{i0}\Lt(v)
  +\sum_{j=1}^s\widehat a_{ij}\Lt(Z_j)\right),
  \qquad i=1,\ldots,s.
\end{aligned}
\label{eq:stage-residual-map}
\end{equation}
At $h=0$, $F(\one v;v,0)=0$ and
$D_ZF(\one v;v,0)=I_{sd}$.  The implicit-function theorem therefore gives a
locally unique smooth stage map $\Psi_h(v)$ with
$F(\Psi_h(v);v,h)=0$ and $\Psi_0(v)=\one v$.  On a compact neighborhood of
the exact solution,
\[
  D_ZF(\Psi_h(v);v,h)=I_{sd}+\OO(h),
\]
uniformly in $v$.  Hence this Jacobian is invertible for sufficiently small
$h$ and its inverse is $I_{sd}+\OO(h)$.

For a step starting from $v=u(t_n)$, let
\[
  U^n=\bigl(u(t_n+c_1h),\ldots,u(t_n+c_sh)\bigr)^T.
\]
The stage defects satisfy
$F(U^n;u(t_n),h)=d^n$ with $\|d^n\|\le K_dh^{p+1}$ by
\cref{thm:stage-consistency}.  Applying the local inverse map to the residual
perturbation gives
\begin{equation}
  \|U^n-\Psi_h(u(t_n))\|\le K_J\|d^n\|
  \le K_JK_dh^{p+1}.
\label{eq:stage-error-bound}
\end{equation}
Because $c_s=1$, the endpoint component $\Phi_h(v):=\Psi_{h,s}(v)$ therefore
satisfies
\begin{equation}
  \tau_{n+1}:=\Phi_h(u(t_n))-u(t_{n+1}),
  \qquad \|\tau_{n+1}\|\le K_\tau h^{p+1}.
\label{eq:local-truncation-error}
\end{equation}

It remains to control perturbations of the starting value.  Differentiating
$F(\Psi_h(v);v,h)=0$ gives
\[
  D_v\Psi_h(v)
  =-\left[D_ZF(\Psi_h(v);v,h)\right]^{-1}
    D_vF(\Psi_h(v);v,h).
\]
Uniform smoothness yields
$D_vF=-\one\otimes I_d+\OO(h)$, and consequently
$D_v\Psi_h=\one\otimes I_d+\OO(h)$.  Selecting the endpoint component gives
$D\Phi_h(v)=I_d+\OO(h)$, so the mean-value theorem implies
\begin{equation}
  \|\Phi_h(v)-\Phi_h(w)\|\le(1+Ch)\|v-w\|
\label{eq:one-step-lipschitz}
\end{equation}
for $v,w$ in the relevant neighborhood and sufficiently small $h$.

With $e_n=u^n-u(t_n)$, equations
\eqref{eq:local-truncation-error}--\eqref{eq:one-step-lipschitz} give
\[
  \|e_{n+1}\|\le(1+Ch)\|e_n\|+K_\tau h^{p+1}.
\]
The discrete Gronwall estimate on $nh\le T$, together with $e_0=0$, yields
$\max_n\|e_n\|\le C_T h^p$.  The same bound keeps the numerical values inside
the neighborhood on which the local branch and uniform estimates are valid,
closing the continuation argument.
\end{proof}

Thus the moment family achieves the desired accuracy statement:
\[
  \boxed{\text{$s$ active stages and global order $2s$.}}
\]
The remaining issue is to close the two free coefficients per row so that the
whole family has an all-$s$ stability theorem.

\section{Linear stability and the Pad\'e target}
\label{sec:stability-background}

Partition the coefficient arrays into the anchor columns and the active-stage
blocks:
\[
  a_0=(a_{10},\ldots,a_{s0})^T,
  \qquad A=(a_{ij})_{i,j=1}^s,
\]
\[
  \widehat a_0=(\widehat a_{10},\ldots,\widehat a_{s0})^T,
  \qquad \widehat A=(\widehat a_{ij})_{i,j=1}^s.
\]
For the Dahlquist test equation
\[
  u_t=\lambda u,
  \qquad z=\lambda\dt,
\]
one has
\[
  \Lop(u)=\lambda u,
  \qquad
  \Lt(u)=\lambda^2u.
\]
Consequently, after writing
\[
  Y=(Y_1,\ldots,Y_s)^T,
\]
the active-stage equations become
\begin{equation}
  M_s(z)Y=b_s(z)u^n,
  \qquad
  M_s(z):=I-zA-z^2\widehat A,
  \qquad
  b_s(z):=\one+za_0+z^2\widehat a_0.
  \label{eq:dahlquist-stage}
\end{equation}
Since $M_s(0)=I$, the matrix $M_s(z)$ is invertible for all sufficiently small
$z$.  The endpoint stage $Y_s$ is the numerical solution at $t_{n+1}$, so the
corresponding stability function is
\begin{equation}
  R_s(z)=\es^TM_s(z)^{-1}b_s(z).
  \label{eq:general-stability}
\end{equation}

\subsection{Accuracy inherited from the moment conditions}

The relation between the moment equations and the Taylor expansion of
$R_s$ can be seen directly at the stage level.  Define the vector of exact
stage ratios for the Dahlquist equation by
\[
  E(z)=
  \begin{pmatrix}
    e^{c_1z}\\ \vdots\\ e^{c_sz}
  \end{pmatrix},
  \qquad e^{c_0z}=1.
\]
For the $i$th row, substitution of the exact ratios into the numerical stage
formula produces the residual
\begin{equation}
\begin{aligned}
  r_i(z)
  :={}&e^{c_i z}-1
  -z\sum_{j=0}^s a_{ij}e^{c_jz}
  -z^2\sum_{j=0}^s\widehat a_{ij}e^{c_jz}.
\end{aligned}
\label{eq:linear-stage-residual}
\end{equation}
Expanding the exponentials at $z=0$ gives
\begin{equation}
\begin{aligned}
  r_i(z)
  ={}&\sum_{m=1}^{\infty}
  \Bigg[
    \frac{c_i^m}{m!}
    -\frac{1}{(m-1)!}\sum_{j=0}^s a_{ij}c_j^{m-1}
    -\frac{1}{(m-2)!}\sum_{j=0}^s\widehat a_{ij}c_j^{m-2}
  \Bigg]z^m,
\end{aligned}
\label{eq:linear-residual-expansion}
\end{equation}
where the last term is omitted for $m=1$.  For $m\ge2$, the coefficient in
brackets can be written as
\[
  \frac{1}{(m-1)!}
  \left[
    \frac{c_i^m}{m}
    -\sum_{j=0}^s a_{ij}c_j^{m-1}
    -(m-1)\sum_{j=0}^s\widehat a_{ij}c_j^{m-2}
  \right].
\]
The same expression, with the second-derivative sum absent, applies when
$m=1$.  Hence the moment conditions \eqref{eq:moments} make every coefficient
of $z^m$ vanish for $m=1,\ldots,2s$, and therefore
\begin{equation}
  r_i(z)=\OO(z^{2s+1}),
  \qquad i=1,\ldots,s.
  \label{eq:linear-stage-residual-order}
\end{equation}
In vector form, \eqref{eq:linear-stage-residual} is precisely
\begin{equation}
  M_s(z)E(z)-b_s(z)=r(z),
  \qquad r(z)=\OO(z^{2s+1}).
  \label{eq:ME-residual}
\end{equation}
Let
\[
  y(z):=\frac{Y}{u^n}=M_s(z)^{-1}b_s(z)
\]
denote the vector of numerical stage ratios.  Since $M_s(0)=I$, its inverse is
analytic and bounded in a neighborhood of the origin.  Subtracting
\eqref{eq:ME-residual} from the numerical stage equation gives
\[
  y(z)-E(z)=-M_s(z)^{-1}r(z)=\OO(z^{2s+1}).
\]
Because the endpoint node is $c_s=1$, the last component yields
\begin{equation}
  R_s(z)=e^z+\OO(z^{2s+1}),
  \qquad z\to0.
  \label{eq:stability-order-from-moments}
\end{equation}
Thus, the moment conditions guarantee that the stability function matches the
exponential through degree $2s$.  This is a local condition at $z=0$ only.
It does not control poles or the magnitude of $R_s(z)$ throughout the left
half-plane and therefore does not, by itself, imply $A$- or $L$-stability.

\subsection{The second-subdiagonal Pad\'e target}

For nonnegative integers $p$ and $q$, the notation
\[
  [p/q]_{e^z}
\]
denotes the Pad\'e approximant of type $(p,q)$ to the exponential function.
It is the rational function $P_{p,q}(z)/Q_{p,q}(z)$ satisfying
\[
  \deg P_{p,q}\le p,
  \qquad
  \deg Q_{p,q}\le q,
  \qquad
  Q_{p,q}(0)=1,
\]
and whose Maclaurin expansion agrees with that of $e^z$ through degree
$p+q$.  Equivalently, its defining approximation condition is
\begin{equation}
  Q_{p,q}(z)e^z-P_{p,q}(z)
  =\OO\!\left(z^{p+q+1}\right),
  \qquad z\to0.
  \label{eq:general-pade-definition}
\end{equation}

The construction below uses the second-subdiagonal member obtained by taking
\[
  p=s-1,
  \qquad q=s+1.
\]
Thus
\begin{equation}
  R_s^\star(z)
  =\frac{P_s(z)}{Q_s(z)}
  =[\,s-1\,/\,s+1\,]_{e^z},
  \qquad Q_s(0)=1.
  \label{eq:pade-target}
\end{equation}
The term ``second subdiagonal'' refers to the degree difference
$q-p=2$.  Since
\[
  p+q=(s-1)+(s+1)=2s,
\]
the defining Pad\'e condition \eqref{eq:general-pade-definition} becomes
\begin{equation}
  Q_s(z)e^z-P_s(z)=\OO(z^{2s+1}).
  \label{eq:pade-order}
\end{equation}
In particular,
\[
  \frac{P_s(z)}{Q_s(z)}
  =e^z+\OO(z^{2s+1})
\]
near the origin, because $Q_s(0)=1$.  The Pad\'e target therefore has exactly
the local approximation order required by \eqref{eq:stability-order-from-moments}.

With the normalization $Q_s(0)=1$, the numerator and denominator can be
written explicitly as
\begin{equation}
  P_s(z)=\sum_{k=0}^{s-1}
  \frac{(2s-k)!(s-1)!}{(2s)!k!(s-1-k)!}\,z^k,
  \label{eq:P-explicit}
\end{equation}
\begin{equation}
  Q_s(z)=\sum_{k=0}^{s+1}
  \frac{(2s-k)!(s+1)!}{(2s)!k!(s+1-k)!}\,(-z)^k.
  \label{eq:Q-explicit}
\end{equation}
For completeness, the coefficient of $z^m$ in $Q_s(z)e^z$ is
\begin{equation}
  [z^m]\bigl(Q_s(z)e^z\bigr)
  =\sum_{k=0}^{\min\{m,s+1\}}
  \frac{(-1)^k(2s-k)!(s+1)!}
  {(2s)!k!(s+1-k)!(m-k)!}.
  \label{eq:Qexp-coefficient}
\end{equation}
The standard alternating binomial identity for this sum gives
\begin{equation}
  [z^m]\bigl(Q_s(z)e^z\bigr)
  =
  \begin{cases}
  \displaystyle
  \frac{(2s-m)!(s-1)!}
  {(2s)!m!(s-1-m)!},
  &0\le m\le s-1,\\[3mm]
  0,&s\le m\le2s.
  \end{cases}
  \label{eq:Qexp-coefficient-cancellation}
\end{equation}
The first line is exactly the coefficient of $z^m$ in $P_s$, while $P_s$
has no terms of degree $m\ge s$.  Hence all coefficients through degree
$2s$ in $Q_s e^z-P_s$ vanish, which verifies \eqref{eq:pade-order} directly.

The exponential Pad\'e table is normal, so $P_s$ and $Q_s$ have their stated
degrees and are coprime \cite{BakerGravesMorris1996}.  Moreover, the first and
second subdiagonals of the exponential Pad\'e table are analytic and bounded
by one in the closed left half-plane
\cite{Ehle1973,HairerNorsettWanner1978}.  Therefore the target
$P_s/Q_s$ is $A$-stable.  Since
\[
  \deg P_s=s-1,
  \qquad
  \deg Q_s=s+1,
\]
one also has
\[
  \frac{P_s(z)}{Q_s(z)}=\OO(z^{-2}),
  \qquad |z|\to\infty,
\]
and hence $P_s(z)/Q_s(z)\to0$ in the left half-plane.  The target is therefore
$L$-stable.  The remaining task is to prove that this rational function can be
realized by the anchored method while preserving all $2s$ moment conditions
and without introducing hidden stage poles.

\section{Pad\'e closure for arbitrary nodes}
\label{sec:pade-closure}

For a formal power series $f(z)=\sum_{m=0}^{\infty}f_mz^m$, let
\[
  \Ttr_N(f):=\sum_{m=0}^{N}f_mz^m
\]
denote Taylor truncation at degree $N$.  Define the stage polynomials
\begin{equation}
  \Pi_j(z)=\Ttr_{2s}\!\left(Q_s(z)e^{c_jz}\right),
  \qquad j=0,\ldots,s.
  \label{eq:Pi-def}
\end{equation}
Write
\[
  \Pi_j(z)=\sum_{r=0}^{2s}\pi_{j,r}z^r.
\]
At the anchor node $c_0=0$,
\[
  Q_s(z)e^{c_0z}=Q_s(z).
\]
Since $\deg Q_s=s+1\le2s$ for every $s\ge1$, Taylor truncation does not alter
this polynomial, and therefore
\begin{equation}
  \Pi_0(z)=Q_s(z).
  \label{eq:Pi0}
\end{equation}
At the endpoint node $c_s=1$, the Pad\'e relation \eqref{eq:pade-order} gives
\[
  Q_s(z)e^z=P_s(z)+\OO(z^{2s+1}).
\]
Thus the Taylor coefficients of $Q_s e^z$ through degree $2s$ coincide with
those of $P_s$.  Since $\deg P_s=s-1\le2s$, it follows that
\begin{equation}
  \Pi_s(z)
  =\Ttr_{2s}\!\left(Q_s(z)e^z\right)
  =P_s(z).
  \label{eq:Pis}
\end{equation}

\subsection{The highest retained coefficient}

Define
\begin{equation}
  \rho_s(c)=[z^{2s}]\left(Q_s(z)e^{cz}\right).
  \label{eq:rho-def}
\end{equation}

\begin{lemma}
\label{lem:rho}
For every positive integer $s$,
\begin{equation}
  \boxed{
  \rho_s(c)=\frac{c^{s-1}(c-1)^{s+1}}{(2s)!}.
  }
  \label{eq:rho-formula}
\end{equation}
Consequently,
\[
  \pi_{j,2s}=\rho_s(c_j).
\]
\end{lemma}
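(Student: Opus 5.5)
The plan is to regard $\rho_s(c)$ as a polynomial in $c$ and identify it through its degree, its zeros with multiplicity, and its leading coefficient. Since $Q_s(z)=\sum_{k=0}^{s+1}q_kz^k$, we have
\[
  \rho_s(c)=\sum_{k=0}^{s+1}q_k\,\frac{c^{2s-k}}{(2s-k)!},
\]
so $\rho_s$ is a polynomial in $c$ of degree at most $2s$. It contains only powers $c^{2s-k}$ with $k\le s+1$, that is, powers $c^{m}$ with $m\ge s-1$. Hence $c^{s-1}$ divides $\rho_s(c)$, which accounts for the factor $c^{s-1}$.

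Next I will show that $c=1$ is a zero of multiplicity at least $s+1$. Differentiating $r$ times in $c$ gives
\[
  \rho_s^{(r)}(c)=[z^{2s}]\bigl(z^rQ_s(z)e^{cz}\bigr)=[z^{2s-r}]\bigl(Q_s(z)e^{cz}\bigr).
\]
Evaluating at $c=1$ gives $[z^{2s-r}](Q_se^z)$. By the Pad\'e relation \eqref{eq:pade-order}, or more concretely by \eqref{eq:Qexp-coefficient-cancellation}, this vanishes whenever $s\le 2s-r\le 2s$, that is, for $r=0,\dots,s$. Thus $(c-1)^{s+1}$ divides $\rho_s$.

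The two factors have combined degree $(s-1)+(s+1)=2s\ge\deg\rho_s$, so $\rho_s(c)=\kappa\,c^{s-1}(c-1)^{s+1}$ for some constant $\kappa$. Comparing the coefficients of $c^{2s}$ shows that the leading coefficient of $\rho_s$ comes only from $k=0$. With $q_0=Q_s(0)=1$ this coefficient is $1/(2s)!$, so $\kappa=1/(2s)!$.

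The closing statement is immediate from the definitions. By \eqref{eq:Pi-def}, truncation at degree $2s$ keeps the $z^{2s}$ coefficient, so $\pi_{j,2s}=[z^{2s}](Q_se^{c_jz})=\rho_s(c_j)$.

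The only delicate step is the multiplicity at $c=1$. It relies on the vanishing range $s\le m\le 2s$ in \eqref{eq:Qexp-coefficient-cancellation}, which must supply exactly $s+1$ conditions, and this count should be checked carefully. For $s=1$ the argument still works: $c^{0}$ is trivial, $Q_1$ has degree $2$, and $\rho_1=(c-1)^2/2$.
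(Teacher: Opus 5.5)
Your proposal is correct and follows essentially the same argument as the paper. In both, $\rho_s$ is a degree-$2s$ polynomial with leading coefficient $1/(2s)!$, the factor $c^{s-1}$ comes from $\deg Q_s=s+1$, and the $(s+1)$-fold zero at $c=1$ comes from $\rho_s^{(r)}(1)=[z^{2s-r}](Q_se^z)=0$ for $r=0,\dots,s$, using the Pad\'e relation and $\deg P_s=s-1$. Your explicit expansion $\rho_s(c)=\sum_k q_k c^{2s-k}/(2s-k)!$ just makes the paper's degree and lowest-power claims more transparent.
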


\begin{proof}
The expression $\rho_s(c)$ is a polynomial of degree $2s$ with leading
coefficient $1/(2s)!$.  Since $\deg Q_s=s+1$, the smallest possible power of
$c$ in \eqref{eq:rho-def} is $c^{2s-(s+1)}=c^{s-1}$, so $c=0$ is a zero of
multiplicity at least $s-1$.  Moreover,
\[
  \rho_s^{(r)}(1)
  =[z^{2s-r}]\left(Q_s(z)e^z\right),
  \qquad r=0,\ldots,s.
\]
The indices $2s-r$ range from $s$ to $2s$.  By \eqref{eq:pade-order} and
$\deg P_s=s-1$, all these coefficients vanish.  Thus $c=1$ is a zero of
multiplicity at least $s+1$.  The two multiplicities sum to $2s$, and the
leading coefficient determines \eqref{eq:rho-formula}.
\end{proof}

\subsection{A polynomial basis theorem}

Let
\[
  \omega(c)=\prod_{j=0}^s(c-c_j),
  \qquad
  \ell_j(c)=\frac{\omega(c)}{(c-c_j)\omega'(c_j)}.
\]
The cardinal Hermite derivative polynomial
\begin{equation}
  K_j(c)=(c-c_j)\ell_j(c)^2
  \label{eq:Kj}
\end{equation}
satisfies $K_j(c_k)=0$ and $K_j'(c_k)=\delta_{jk}$, with
\begin{equation}
  \lc(K_j)=\frac{1}{\omega'(c_j)^2}>0.
  \label{eq:K-leading}
\end{equation}

\begin{theorem}[Pad\'e--Hermite basis]
\label{thm:basis}
For every ordered node set
$0=c_0<c_1<\cdots<c_s=1$, the $2s+2$ polynomials
\begin{equation}
  \Pi_0,\ldots,\Pi_s,
  \quad z\Pi_0,\ldots,z\Pi_s
  \label{eq:basis-set}
\end{equation}
form a basis of $\PP_{2s+1}$.
\end{theorem}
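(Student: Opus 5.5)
The plan is to exploit the dimension count and prove linear independence. Every $\Pi_j$ has degree at most $2s$, so the $2s+2$ polynomials in \eqref{eq:basis-set} lie in $\PP_{2s+1}$, which has dimension $2s+2$. It therefore suffices to show that a relation
\[
  \sum_{j=0}^s\alpha_j\Pi_j(z)+z\sum_{j=0}^s\beta_j\Pi_j(z)=0
\]
forces all $\alpha_j,\beta_j$ to vanish. To each such coefficient vector I attach the Hermite functional
\[
  \Lambda(f)=\sum_{j=0}^s\alpha_jf(c_j)+\sum_{j=0}^s\beta_jf'(c_j).
\]
The key observation is that $\partial_c e^{cz}=ze^{cz}$. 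Hence
\[
  \sum_j\alpha_jQ_s(z)e^{c_jz}+z\sum_j\beta_jQ_s(z)e^{c_jz}=Q_s(z)\,G(z),
\]
where
\[
  G(z):=\sum_{m\ge0}\frac{\Lambda(c^m)}{m!}z^m .
\]

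\emph{Step 1 (low coefficients).} Since $zT_{2s}(g)=T_{2s+1}(zg)$, the coefficients of degree $0,\dots,2s$ of the assumed relation coincide with those of $Q_s(z)G(z)$. Because $Q_s(0)=1$, $Q_s$ is invertible as a power series. The vanishing of $T_{2s}(Q_sG)$ therefore gives $\Lambda(c^m)=0$ for $m=0,\dots,2s$; that is, $\Lambda$ annihilates $\PP_{2s}$.

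\emph{Step 2 (structure of $\Lambda$).} Hermite interpolation with values and first derivatives at the $s+1$ distinct nodes is unisolvent on $\PP_{2s+1}$. The interpolant is
\[
  Hf=\sum_j f(c_j)h_j+\sum_j f'(c_j)K_j ,
\]
with $K_j$ as in \eqref{eq:Kj}. So the $2s+2$ functionals $f\mapsto f(c_j)$ and $f\mapsto f'(c_j)$ are a basis of $\PP_{2s+1}^*$, and those killing $\PP_{2s}$ form a one-dimensional space. This space is spanned by $f\mapsto \lc_{2s+1}(Hf)$, the coefficient of $c^{2s+1}$ in $Hf$. Thus $\Lambda=\kappa\,\lc_{2s+1}(H\,\cdot)$ for some scalar $\kappa$. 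Reading off the derivative weights and using \eqref{eq:K-leading}, I get
\[
  \beta_j=\kappa\,\lc(K_j)=\frac{\kappa}{\omega'(c_j)^2}.
\]
In particular, all $\beta_j$ share the sign of $\kappa$.

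\emph{Step 3 (top coefficient).} The $z^{2s+1}$ coefficient of the relation comes only from the $z\Pi_j$ terms. It equals $\sum_j\beta_j\pi_{j,2s}$, which by \cref{lem:rho} is
\[
  \frac{\kappa}{(2s)!}\sum_{j=0}^s\frac{c_j^{s-1}(c_j-1)^{s+1}}{\omega'(c_j)^2}.
\]
I expect this sign argument to be the crux, and it is where the ordering $0=c_0<\dots<c_s=1$ is used.
\begin{itemize}[label={}]
\item The endpoint $c_s=1$ contributes zero.
\item The anchor $c_0=0$ contributes zero when $s\ge2$, and contributes $1$ when $s=1$.
\item Each interior node $c_j\in(0,1)$ gives $c_j^{s-1}>0$ and $\operatorname{sign}(c_j-1)^{s+1}=(-1)^{s+1}$.
\end{itemize}
Hence all nonzero terms have the common sign $(-1)^{s+1}$. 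At least one term is nonzero: an interior node exists when $s\ge2$, and the $c_0$ term is nonzero when $s=1$. The sum is therefore nonzero, which forces $\kappa=0$. Then $\Lambda=0$, and since point values and derivatives at distinct nodes are linearly independent functionals, all $\alpha_j$ and $\beta_j$ vanish. This proves linear independence, and hence the basis property.
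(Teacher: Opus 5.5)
Your proposal is correct and follows essentially the same route as the paper's proof. Both reduce the relation to a Hermite functional $\Lambda$ that annihilates $\PP_{2s}$, write $\Lambda=\kappa\,[c^{2s+1}]$ so that $\beta_j=\kappa/\omega'(c_j)^2$, and use the sign pattern of $\pi_{j,2s}=\rho_s(c_j)$ from \cref{lem:rho} in the $z^{2s+1}$ coefficient to force $\kappa=0$; your identity $G(z)=\sum_m\Lambda(c^m)z^m/m!$ and the explicit check that some term is nonzero just spell out steps the paper leaves implicit.
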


\begin{proof}
Assume
\begin{equation}
  \sum_{j=0}^s\alpha_j\Pi_j(z)
  +z\sum_{j=0}^s\beta_j\Pi_j(z)=0.
  \label{eq:basis-relation}
\end{equation}
Since
\[
  \Pi_j(z)=Q_s(z)e^{c_jz}+\OO(z^{2s+1}),
\]
comparison through degree $2s$ in \eqref{eq:basis-relation}, followed by
division by $Q_s(0)=1$, gives
\begin{equation}
  \sum_{j=0}^s(\alpha_j+z\beta_j)e^{c_jz}
  =\OO(z^{2s+1}).
  \label{eq:exp-relation}
\end{equation}
Define a linear functional on $\PP_{2s+1}$ by
\[
  \Lambda(p)=\sum_{j=0}^s\alpha_jp(c_j)
  +\sum_{j=0}^s\beta_jp'(c_j).
\]
The coefficients of \eqref{eq:exp-relation} show that
$\Lambda(c^r)=0$ for $r=0,\ldots,2s$.  Therefore
\begin{equation}
  \Lambda(p)=\kappa\,[c^{2s+1}]p(c)
  \label{eq:leading-functional}
\end{equation}
for some scalar $\kappa$.  Applying \eqref{eq:leading-functional} to
$K_j$ gives
\begin{equation}
  \beta_j=\frac{\kappa}{\omega'(c_j)^2}.
  \label{eq:beta-sign}
\end{equation}
Thus all nonzero $\beta_j$ have the same sign.

The coefficient of $z^{2s+1}$ in \eqref{eq:basis-relation} is
\begin{equation}
  \sum_{j=0}^s\beta_j\pi_{j,2s}=0.
  \label{eq:top-relation}
\end{equation}
For $s\ge2$, \cref{lem:rho} shows that the endpoint contributions vanish and
all internal values $\pi_{j,2s}$ have the same nonzero sign
$(-1)^{s+1}$.  In view of \eqref{eq:beta-sign},
\eqref{eq:top-relation} is impossible unless $\kappa=0$.  For $s=1$,
$\pi_{0,2}=1/2$ and $\pi_{1,2}=0$, so the same conclusion follows.  Hence
$\Lambda=0$, and the independence of the Hermite value and derivative
functionals at distinct nodes gives $\alpha_j=\beta_j=0$ for every $j$.
There are $2s+2$ independent polynomials in the $(2s+2)$-dimensional space
$\PP_{2s+1}$.
\end{proof}

\subsection{Unique coefficient construction}

Since $\Pi_i(0)=Q_s(0)=1$, the polynomial
$(\Pi_i-Q_s)/z$ belongs to $\PP_{2s-1}$.  By \cref{thm:basis}, for every
$i=1,\ldots,s$ there are unique coefficients $a_{ij}$ and
$\widehat a_{ij}$ such that
\begin{equation}
  \boxed{
  \frac{\Pi_i(z)-Q_s(z)}{z}
  =\sum_{j=0}^s a_{ij}\Pi_j(z)
  +z\sum_{j=0}^s\widehat a_{ij}\Pi_j(z).
  }
  \label{eq:representation}
\end{equation}
Equivalently,
\begin{equation}
  \Pi_i(z)=Q_s(z)
  +z\sum_{j=0}^s a_{ij}\Pi_j(z)
  +z^2\sum_{j=0}^s\widehat a_{ij}\Pi_j(z).
  \label{eq:Pi-identity}
\end{equation}

\begin{theorem}[Moment preservation]
\label{thm:moment-preservation}
The coefficients defined by \eqref{eq:representation} satisfy all moment
conditions \eqref{eq:moments}.
\end{theorem}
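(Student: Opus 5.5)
The plan is to turn the polynomial identity \eqref{eq:Pi-identity} back into an identity between formal power series involving the exponentials, and then read off the moment conditions from the Taylor coefficients. The tool is the linear-stage residual from \Cref{sec:stability-background}: by \eqref{eq:linear-residual-expansion}, the moment conditions \eqref{eq:moments} for row $i$ are equivalent to
\[
r_i(z)=e^{c_iz}-1-z\sum_{j=0}^s a_{ij}e^{c_jz}-z^2\sum_{j=0}^s\widehat a_{ij}e^{c_jz}=\OO(z^{2s+1}).
\]
The bracketed coefficient of $z^m$ is a nonzero multiple of the $m$th moment defect, for $m=1,\ldots,2s$. It therefore suffices to show $r_i(z)=\OO(z^{2s+1})$.

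First I replace each $\Pi_j$ in \eqref{eq:Pi-identity} by $Q_s(z)e^{c_jz}+\OO(z^{2s+1})$, which is the definition \eqref{eq:Pi-def}. On the right-hand side the $\Pi_j$ are multiplied by $z$ or $z^2$, so the truncation errors stay $\OO(z^{2s+1})$. This gives
\[
Q_s(z)e^{c_iz}=Q_s(z)+z\sum_j a_{ij}Q_s(z)e^{c_jz}+z^2\sum_j\widehat a_{ij}Q_s(z)e^{c_jz}+\OO(z^{2s+1}),
\]
which is $Q_s(z)\,r_i(z)=\OO(z^{2s+1})$. Since $Q_s(0)=1$, the series $1/Q_s$ is a formal power series (analytic near the origin). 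Multiplying by it yields $r_i(z)=\OO(z^{2s+1})$, and the coefficient comparison then delivers \eqref{eq:moments} for every $i$ and every $m=1,\ldots,2s$.

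The only delicate point is bookkeeping in the first step. The identity \eqref{eq:Pi-identity} holds exactly in $\PP_{2s+1}$, with both sides of degree at most $2s+1$, so discarding terms of degree $\ge 2s+1$ is legitimate. The degree-$(2s+1)$ coefficient is simply not used; it is fixed by the choice of basis rather than by the moments. I also need the $z^0$ coefficient to match, so that $r_i$ has no constant term. This holds because $\Pi_i(0)=Q_s(0)=1$, consistent with the definition of $r_i$. The $m=1$ case, where the $\widehat a$ sum is absent, is covered by the same expansion \eqref{eq:linear-residual-expansion}. No step should be a real obstacle; the argument is essentially the converse direction of the computation leading to \eqref{eq:linear-stage-residual-order}.
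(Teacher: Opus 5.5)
Your proposal is correct and matches the paper's proof step for step: substitute $\Pi_j=Q_s(z)e^{c_jz}+\OO(z^{2s+1})$ into \eqref{eq:Pi-identity} to get $Q_s(z)\,r_i(z)=\OO(z^{2s+1})$, remove the factor $Q_s$ using $Q_s(0)=1$, and read off \eqref{eq:moments} by comparing coefficients. One small bookkeeping slip that does not affect the argument: \eqref{eq:Pi-identity} is \eqref{eq:representation} multiplied by $z$, so its right-hand side formally reaches degree $2s+2$ (not $2s+1$), and its vanishing $z^{2s+1}$ and $z^{2s+2}$ coefficients are exactly the closure conditions \eqref{eq:closure1}--\eqref{eq:closure2}; your argument only needs the exact polynomial identity modulo $z^{2s+1}$.
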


\begin{proof}
Use
\[
  \Pi_j(z)=Q_s(z)e^{c_jz}+\OO(z^{2s+1})
\]
in \eqref{eq:Pi-identity}.  The result is
\[
  Q_s(z)\left(
  e^{c_i z}-1
  -z\sum_{j=0}^sa_{ij}e^{c_jz}
  -z^2\sum_{j=0}^s\widehat a_{ij}e^{c_jz}
  \right)
  =\OO(z^{2s+1}).
\]
Since $Q_s(0)=1$, the expression in parentheses is
$\OO(z^{2s+1})$.  Equating the coefficients of $z^m$ for
$m=1,\ldots,2s$ gives \eqref{eq:moments}.
\end{proof}

The two closure conditions that supplement the moments can be read directly
from the coefficients of $z^{2s+1}$ and $z^{2s+2}$ in
\eqref{eq:Pi-identity}:
\begin{equation}
  \sum_{j=0}^s a_{ij}\pi_{j,2s}
  +\sum_{j=0}^s\widehat a_{ij}\pi_{j,2s-1}=0,
  \label{eq:closure1}
\end{equation}
\begin{equation}
  \sum_{j=0}^s\widehat a_{ij}\pi_{j,2s}=0.
  \label{eq:closure2}
\end{equation}
Thus \eqref{eq:moments}, \eqref{eq:closure1}, and \eqref{eq:closure2}
form a square nonsingular linear system of $2s+2$ equations for every row.

\subsection{The stage determinant and absence of hidden poles}

Define
\begin{equation}
  M_s(z)=I-zA-z^2\widehat A,
  \qquad
  b_s(z)=\one+za_0+z^2\widehat a_0.
  \label{eq:M-b}
\end{equation}
Separating the anchor terms in \eqref{eq:Pi-identity} gives
\begin{equation}
  M_s(z)
  \begin{pmatrix}
    \Pi_1(z)\\ \vdots\\ \Pi_s(z)
  \end{pmatrix}
  =Q_s(z)b_s(z).
  \label{eq:M-Pi}
\end{equation}
This already identifies a rational stage solution, but it does not yet rule
out extra poles in $M_s^{-1}$ that cancel in the endpoint ratio.  The next
theorem excludes such hidden poles.

\begin{theorem}[Exact stage determinant]
\label{thm:determinant}
For the coefficients defined by \eqref{eq:representation},
\begin{equation}
  \boxed{
  \det\left(I-zA-z^2\widehat A\right)=Q_s(z).
  }
  \label{eq:det-Q}
\end{equation}
\end{theorem}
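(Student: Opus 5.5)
The plan is to combine a divisibility argument, based on Cramer's rule applied to \eqref{eq:M-Pi} and the coprimality of $P_s$ and $Q_s$, with a degree count. Write $D(z)=\det M_s(z)$. Then $D$ is a polynomial of degree at most $2s$ with $D(0)=1$. Since $M_s(z)$ is invertible near $z=0$, Cramer's rule applied to \eqref{eq:M-Pi} gives
\[
  D(z)\,\Pi_s(z)=Q_s(z)\,N_s(z),
\]
where $N_s(z)$ is the determinant of $M_s(z)$ with its last column replaced by $b_s(z)$. In particular $N_s$ is a polynomial. By \eqref{eq:Pis}, $\Pi_s=P_s$, and $P_s$ and $Q_s$ are coprime by normality of the exponential Pad\'e table. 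Hence $Q_s$ divides $D$, so $D=Q_sG$ with $\deg G\le s-1$ and $G(0)=1$. It then remains to show $G\equiv1$. This reduces to either of two statements: $\deg D\le s+1$, or $D(z)\equiv Q_s(z)\pmod{z^{s}}$.

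The main obstacle is excluding the spurious factor $G$. I would attack it by showing that $D$ has no root outside the zero set of $Q_s$, counted with multiplicity. Suppose $z_0$ is a root of $D$, and let $y\ne0$ be a left null vector, $y^TM_s(z_0)=0$. Form the polynomial combination
\[
  \Phi(z)=\sum_{i=1}^s y_i\Bigl(\Pi_i(z)-z\sum_{j=1}^s a_{ij}\Pi_j(z)-z^2\sum_{j=1}^s\widehat a_{ij}\Pi_j(z)\Bigr).
\]
By \eqref{eq:Pi-identity}, $\Phi$ equals $Q_s(z)\,y^Tb_s(z)$. I would then reorganize $\Phi$ in the basis \eqref{eq:basis-set}. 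The coefficient of each $\Pi_j$, namely $y_j-\sum_i y_i(z a_{ij}+z^2\widehat a_{ij})$, vanishes at $z=z_0$. Using the uniqueness in \cref{thm:basis}, I would try to show that this forces $y^Tb_s(z)$ to vanish to matching order at $z_0$ only if $Q_s(z_0)=0$. Multiplicities would be handled by differentiating the same identity.

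In parallel, as a fallback, I would try the degree route through the closure relations. The relation \eqref{eq:closure2}, together with \cref{lem:rho}, says that the vector $(\rho_s(c_j))_{j=1}^s$ is annihilated by $\widehat A$. For $s\ge2$ this holds because $\rho_s(0)=0$, and the vector has nonzero entries at all internal nodes. Likewise, \eqref{eq:closure1} couples $A$ to the vector $(\pi_{j,2s-1})$. From these I would try to prove that the high-degree coefficients of $D$, namely those of $z^{s+2},\dots,z^{2s}$, vanish. The method is to expand $\det(I-zA-z^2\widehat A)$ by multilinearity in the columns and show that every product using too many $\widehat A$ or $A$ columns has rank-deficient structure. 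The case $s=1$ can be checked directly: here $\widehat a_{10}=0$ because $\pi_{0,2}=1/2$, and $D$ is then a quadratic with $D=Q_1\cdot G$ and $\deg G\le 0$, so $G\equiv G(0)=1$.

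Once either route gives $G\equiv1$, the result \eqref{eq:det-Q} follows, and with it $R_s=N_s/Q_s\cdot Q_s/D$ reduces to $P_s/Q_s$ with no hidden poles.
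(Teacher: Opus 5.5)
\textbf{The step $G\equiv1$ is never proved.} Your first step is correct and matches the first half of the paper's proof. Cramer's rule on the endpoint row of \eqref{eq:M-Pi}, together with coprimality of $P_s$ and $Q_s$, gives $D=Q_sG$ with $G$ a polynomial, $\deg G\le s-1$ and $G(0)=1$. That already finishes $s=1$. But both of your routes to $G\equiv1$ stop at ``I would try''. Route~2 does not get close. \eqref{eq:closure2} and \cref{lem:rho} give $\widehat A v=0$ with $v_j=\rho_s(c_j)$, so $\det\widehat A=0$ and the $z^{2s}$ coefficient of $D$ vanishes. Nothing in the sketch controls the coefficients of $z^{s+2},\dots,z^{2s-1}$.

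\textbf{The missing idea.} Apply Cramer's rule to \emph{every} row of \eqref{eq:M-Pi}, not only the last. Let $N_j$ be the determinant of $M_s$ with its $j$th column replaced by $b_s$. Then $D\,\Pi_j=Q_sN_j$, hence $N_j=G\,\Pi_j$ for $j=1,\dots,s$. This is the paper's statement that the maximal minors of $[-b_s\mid M_s]$ equal $S(\Pi_0,\dots,\Pi_s)$, with $S=G$. Each $N_j$ is an $s\times s$ determinant with entries of degree at most two, so $\deg N_j\le 2s$. For $s\ge2$ and any internal node, \cref{lem:rho} gives $[z^{2s}]\Pi_j=\rho_s(c_j)=c_j^{s-1}(c_j-1)^{s+1}/(2s)!\neq0$, the same nonvanishing you noticed in Route~2. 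So $\deg\Pi_j=2s$, which forces $\deg G=0$ and $G=G(0)=1$. The endpoint row alone cannot give this, because $\Pi_s=P_s$ has degree only $s-1$. The paper gets a $\Pi_j$ of exact degree $2s$ from \cref{thm:basis} instead; the conclusion is the same.

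\textbf{Route~1 can be completed, but not as you set it up.} A left null vector of $M_s(z_0)$ alone only yields $Q_s(z_0)\,y^Tb_s(z_0)=0$. That is vacuous exactly when a root of $G$ coincides with a root of $Q_s$, and ``differentiating for multiplicities'' does not handle that case. The fix is again to use $N_j=G\Pi_j$:
\begin{enumerate}[label=(\roman*)]
\item At any root $z_0$ of $G$, all maximal minors of $[-b_s\mid M_s(z_0)]$ vanish. So there is $y\neq0$ with $y^TM_s(z_0)=0$ and $y^Tb_s(z_0)=0$.
\item Put $w_j=\sum_iy_ia_{ij}$, $\widehat w_j=\sum_iy_i\widehat a_{ij}$ and $y_0=-\sum_iy_i$. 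The two conditions say $y_j=z_0w_j+z_0^2\widehat w_j$ for $j=0,\dots,s$.
\item Summing \eqref{eq:Pi-identity} against $y$ gives $\sum_jy_j\Pi_j=z\sum_jw_j\Pi_j+z^2\sum_j\widehat w_j\Pi_j$.
\item Subtract and divide by $z_0-z$. This leaves $\sum_j(w_j+z_0\widehat w_j)\Pi_j+z\sum_j\widehat w_j\Pi_j=0$.
\item \cref{thm:basis} also holds over $\mathbb{C}$, since $B_s$ is nonsingular. It forces $w=\widehat w=0$, hence $y=0$, a contradiction.
\end{enumerate}
This would show $G$ has no roots at all, with no degree count needed. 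As submitted, though, the proposal contains neither this computation nor the degree argument, so \eqref{eq:det-Q} is not yet proved.
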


\begin{proof}
Introduce the polynomial matrix
\begin{equation}
  \mathcal K_s(z)=\left[-b_s(z)\ \middle|\ M_s(z)\right]
  \in\RR[z]^{s\times(s+1)}.
  \label{eq:K-matrix}
\end{equation}
Equation \eqref{eq:M-Pi} is
\begin{equation}
  \mathcal K_s(z)
  \begin{pmatrix}
    \Pi_0(z)\\ \Pi_1(z)\\ \vdots\\ \Pi_s(z)
  \end{pmatrix}=0.
  \label{eq:K-kernel}
\end{equation}
At $z=0$, $M_s(0)=I$, so $\mathcal K_s$ has rank $s$ over the rational
function field $\RR(z)$.  Let
$\Delta=(\Delta_0,\ldots,\Delta_s)^T$ be the signed vector of maximal minors,
normalized so that $\Delta_0=\det M_s$.  The cofactor identity gives
$\mathcal K_s\Delta=0$.  The nullspace over $\RR(z)$ is one-dimensional, so
\[
  \Delta_j(z)=S(z)\Pi_j(z),
  \qquad j=0,\ldots,s,
\]
for a rational function $S$.  Since $\Pi_0=Q_s$, $\Pi_s=P_s$, and $P_s,Q_s$
are coprime, the polynomials $\Pi_0,\ldots,\Pi_s$ have greatest common divisor
one.  Therefore $S$ is a polynomial.

Every maximal minor has degree at most $2s$, because every entry of
$\mathcal K_s$ has degree at most two.  By \cref{thm:basis}, at least one
$\Pi_j$ has degree exactly $2s$; otherwise all polynomials in
\eqref{eq:basis-set} would lie in $\PP_{2s}$ and could not be independent.
Hence $S$ is constant.  At $z=0$,
\[
  \Delta_0(0)=\det M_s(0)=1,
  \qquad \Pi_0(0)=Q_s(0)=1,
\]
so $S=1$.  Thus $\Delta_0=\Pi_0=Q_s$, proving \eqref{eq:det-Q}.
\end{proof}

\subsection{Pad\'e realization and stability for every $s$}

\begin{theorem}[$L$-stable Pad\'{e} realization]
\label{thm:main-stability}
Let $s\ge1$ and let
\[
  0=c_0<c_1<\cdots<c_s=1.
\]
Construct the coefficients by \eqref{eq:representation}.  Then the anchored
method \eqref{eq:anchored-method} satisfies all $2s$ moment conditions.  For
the Dahlquist equation, every active stage has the exact transfer function
\begin{equation}
  G_j(z):=\frac{Y_j}{u^n}=\frac{\Pi_j(z)}{Q_s(z)},
  \qquad j=1,\ldots,s,
  \label{eq:stage-transfer}
\end{equation}
and the endpoint stability function is
\begin{equation}
  \boxed{
  R_s(z)=G_s(z)=\frac{P_s(z)}{Q_s(z)}=[s-1/s+1]_{e^z}.
  }
  \label{eq:exact-R}
\end{equation}
Moreover,
\begin{equation}
  \det(I-zA-z^2\widehat A)=Q_s(z),
\end{equation}
so the stage system has no poles other than the zeros of $Q_s$.  The accepted
one-step map is $L$-stable (and therefore $A$-stable) for every positive
integer $s$.
\end{theorem}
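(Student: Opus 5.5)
The theorem is essentially an assembly of the results already established in this section, so the proof will proceed by collecting them in order.

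\emph{Coefficients and moments.} Fix the nodes and construct the coefficients by \eqref{eq:representation}. Their existence and uniqueness follow from \cref{thm:basis}, because $(\Pi_i-Q_s)/z\in\PP_{2s-1}\subset\PP_{2s+1}$. The moment conditions \eqref{eq:moments} then hold by \cref{thm:moment-preservation}, which disposes of the first assertion.

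\emph{Stage transfer functions.} By \cref{thm:determinant}, $\det M_s(z)=Q_s(z)$, so $M_s(z)$ is invertible over $\RR(z)$ and at every $z$ with $Q_s(z)\neq0$. At such $z$ the Dahlquist stage system \eqref{eq:dahlquist-stage} has the unique solution $y=M_s^{-1}b_s$. From \eqref{eq:M-Pi}, the vector $(\Pi_1,\ldots,\Pi_s)^T/Q_s$ solves the same system. Uniqueness therefore gives
\[
  G_j=\frac{\Pi_j}{Q_s},\qquad j=1,\ldots,s,
\]
which is \eqref{eq:stage-transfer}. Taking $j=s$ and using $\Pi_s=P_s$ from \eqref{eq:Pis}, together with the update \eqref{eq:update}, yields $R_s=P_s/Q_s=[s-1/s+1]_{e^z}$, which is \eqref{eq:exact-R}.

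\emph{No hidden poles.} Cramer's rule writes every entry of $M_s^{-1}$ as a polynomial divided by $\det M_s=Q_s$. Hence every pole of a stage transfer function, and every pole of $M_s^{-1}$ itself, lies among the zeros of $Q_s$.

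\emph{$L$-stability.} This rests on the facts about $P_s/Q_s$ recalled in \cref{sec:stability-background}. The first step is $A$-stability. The second-subdiagonal exponential Pad\'e approximants are analytic and bounded by one in the closed left half-plane \cite{Ehle1973,HairerNorsettWanner1978}. In particular, $Q_s$ has no zeros there. Note also that $P_s$ and $Q_s$ are coprime, so no zero of $Q_s$ is cancelled in $R_s$. The second step is the limit at infinity. Since $\deg P_s=s-1<s+1=\deg Q_s$, we have $R_s(z)\to0$ as $|z|\to\infty$. Together these give $L$-stability, and hence $A$-stability, for every $s\ge1$.

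There is no real obstacle, since the difficult algebra is already contained in \cref{thm:basis,thm:determinant}. The only delicate point is the identification of $Y_j/u^n$ with $\Pi_j/Q_s$. This needs the uniqueness of the stage solution, and that uniqueness is exactly what the determinant identity supplies: it rules out a rational solution of \eqref{eq:M-Pi} coexisting with a singular $M_s$ away from the zeros of $Q_s$. For $z$ at a zero of $Q_s$, which necessarily lies in the open right half-plane, both sides are undefined poles, so nothing further is needed. I would also remark that the result is independent of the interior nodes, because the node locations enter only through the basis theorem.
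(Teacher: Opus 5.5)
Your proposal is correct and follows essentially the same route as the paper's proof: the moment conditions come from \cref{thm:moment-preservation}, the stage transfer functions from \eqref{eq:M-Pi} combined with the unique solvability guaranteed by $\det M_s=Q_s$ wherever $Q_s(z)\neq0$, the endpoint identity from $\Pi_s=P_s$, and $L$-stability from Ehle's theorem together with $\deg Q_s-\deg P_s=2$. Your Cramer's-rule justification of the pole statement and your coprimality remark spell out steps that the paper states in a single line.
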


\begin{proof}
Moment preservation follows from \cref{thm:moment-preservation}.  For every
$z$ such that $Q_s(z)\ne0$, \cref{thm:determinant} implies that the coupled
stage system has a unique solution.  Equation \eqref{eq:M-Pi} therefore gives
\eqref{eq:stage-transfer}.  Since $c_s=1$ and $\Pi_s=P_s$, the endpoint ratio
is exactly \eqref{eq:exact-R}.

The second subdiagonal of the exponential Pad\'{e} table is analytic and
bounded by one in $\operatorname{Re}z\le0$ \cite{Ehle1973}.  Hence
\[
  |R_s(z)|\le1,
  \qquad \operatorname{Re}z\le0,
\]
which verifies the $A$-stability part of the standard definition.  In
addition,
\[
  \deg P_s=s-1,
  \qquad \deg Q_s=s+1,
\]
so
\[
  R_s(z)=\mathcal O(z^{-2})\to0
\]
as $|z|\to\infty$ in the left half-plane.  Together, these two properties
prove $L$-stability.  Finally, the pole statement follows directly from
\cref{thm:determinant}.
\end{proof}

Combining \cref{thm:convergence,thm:main-stability} gives the central result:
\[
  \boxed{
  \begin{aligned}
  &s\text{ active unknown stages},\\
  &\text{global order }2s,\\
  &R_s(z)=[s-1/s+1]_{e^z},\\
  &L\text{-stability for every }s\ge1,\\
  &\det(I-zA-z^2\widehat A)=Q_s(z).
  \end{aligned}}
\]

\section{Coefficient computation and the role of the nodes}
\label{sec:computation}

The proof provides two equivalent computational routes.

\subsection{Polynomial-basis construction}

Let $B_s$ be the square coefficient matrix whose columns are the monomial
coefficient vectors of
\[
  \Pi_0,\ldots,\Pi_s,z\Pi_0,\ldots,z\Pi_s
\]
in $\PP_{2s+1}$.  For row $i$, let $r_i$ be the coefficient vector of
$(\Pi_i-Q_s)/z$.  Then
\begin{equation}
  B_s
  \begin{pmatrix}
  a_{i0}\\ \vdots\\ a_{is}\\
  \widehat a_{i0}\\ \vdots\\ \widehat a_{is}
  \end{pmatrix}
  =r_i.
  \label{eq:basis-linear-system}
\end{equation}
Theorem \ref{thm:basis} proves that $B_s$ is nonsingular.

\begin{algorithm}[t]
\caption{Pad\'e-closed anchored method of order $2s$}
\label{alg:construction}
\begin{algorithmic}[1]
\Require $s\ge1$ and nodes $0=c_0<c_1<\cdots<c_s=1$
\State Form $P_s,Q_s$ from \eqref{eq:P-explicit}--\eqref{eq:Q-explicit}
\For{$j=0,\ldots,s$}
  \State $\Pi_j\gets\Ttr_{2s}(Q_se^{c_jz})$
\EndFor
\State Form the basis matrix $B_s$ from
$\{\Pi_j,z\Pi_j\}_{j=0}^s$
\For{$i=1,\ldots,s$}
  \State $r_i\gets\coef((\Pi_i-Q_s)/z)$
  \State Solve \eqref{eq:basis-linear-system} for row $i$
\EndFor
\State Return the full arrays $A_{\rm full}$ and $\widehat A_{\rm full}$
\end{algorithmic}
\end{algorithm}

\subsection{Moment-plus-closure construction}

Alternatively, solve the $2s$ moments \eqref{eq:moments} together with the two
closure equations \eqref{eq:closure1}--\eqref{eq:closure2}.  This form makes
clear that the Pad\'e condition uses exactly the two degrees of freedom left by
accuracy.

\subsection{What the nodes do and do not control}

Theorem \ref{thm:main-stability} changes the node-selection problem.  Scalar
linear stability no longer distinguishes admissible node sets: every ordered
set gives the same stability function.  The internal nodes still influence:
\begin{itemize}
  \item the magnitudes and signs of the coefficients;
  \item the conditioning of \eqref{eq:basis-linear-system};
  \item the spatial location of internal physical states;
  \item the conditioning and convergence of the coupled nonlinear stage
  solve;
  \item stage error constants for non-Dahlquist problems.
\end{itemize}
For low order, equispaced nodes provide simple exact rational coefficients.  At
larger $s$, clustered nodes such as shifted Chebyshev--Lobatto points are often
a better starting point for numerical conditioning, but no universal
optimality claim is made.  In high-order implementations, the coefficient
construction should use exact arithmetic, arbitrary precision, or a scaled
orthogonal polynomial basis rather than an unscaled monomial solve.

\subsection{A conditioning illustration}

The node independence of the endpoint stability function does not imply node
independence of the coefficient construction.  To illustrate this point,
\cref{tab:node-conditioning} reports the two-norm condition number of the
monomial basis matrix $B_s$ in \eqref{eq:basis-linear-system} and the largest
coefficient magnitude for equispaced and shifted Chebyshev--Lobatto nodes.
The quantities were evaluated in 100-digit arithmetic.  The two choices are
nearly indistinguishable at very low order; from moderate order onward, the
clustered nodes reduce both the condition number and coefficient growth.  The
condition number is basis- and scaling-dependent, so the table is a
diagnostic rather than an optimality statement.

\begin{table}[t]
\centering
\small
\setlength{\tabcolsep}{3.5pt}
\renewcommand{\arraystretch}{1.08}
\begin{tabular}{crrrr}
\toprule
$s$ & \shortstack{$\log_{10}\kappa_2(B_s)$\\eq.} & \shortstack{$\log_{10}\kappa_2(B_s)$\\C--L} & \shortstack{$\log_{10}\|C\|_{\max}$\\eq.} & \shortstack{$\log_{10}\|C\|_{\max}$\\C--L}\\
\midrule
2 & 3.282 & 3.282 & 0.125 & 0.125\\
3 & 5.582 & 5.763 & 0.084 & 0.372\\
4 & 8.685 & 8.696 & 0.927 & 0.778\\
5 & 11.934 & 11.880 & 1.387 & 1.343\\
6 & 15.435 & 15.227 & 2.213 & 1.882\\
7 & 19.070 & 18.714 & 2.805 & 2.443\\
8 & 22.846 & 22.319 & 3.640 & 3.006\\
9 & 26.734 & 26.028 & 4.293 & 3.576\\
10 & 30.725 & 29.830 & 5.136 & 4.149\\
\bottomrule
\end{tabular}
\caption{Conditioning of the monomial construction for equispaced (eq.) and shifted Chebyshev--Lobatto (C--L) nodes. Here $C$ denotes the collection of all $a_{ij}$ and $\widehat a_{ij}$ coefficients.}
\label{tab:node-conditioning}
\end{table}

\section{Relation to sequential two-stage fourth-order schemes}
\label{sec:sequential}

The common-denominator Pad\'e construction is generally fully coupled.  It should be
distinguished from a genuinely sequential two-stage fourth-order method.  A
representative sequential scheme first computes the midpoint state
$Y_1=u^{n+1/2}$ from
\begin{equation}
  Y_1=Y_0+\frac{\dt}{4}
  \left(\Lop(Y_0)+\Lop(Y_1)\right)
  +\frac{\dt^2}{48}
  \left(\Lt(Y_0)-\Lt(Y_1)\right),
  \label{eq:sequential-stage1}
\end{equation}
and then computes $Y_2=u^{n+1}$ from
\begin{equation}
\begin{aligned}
  Y_2={}&Y_0+\dt\Bigg[
  \left(\frac16+4C+\frac12D\right)\Lop(Y_0)
  +\left(\frac23-8C+2D\right)\Lop(Y_1)
  +\left(\frac16+4C-\frac52D\right)\Lop(Y_2)
  \Bigg]\\
  &+\dt^2\left[
  C\Lt(Y_0)+D\Lt(Y_1)
  +\left(\frac12D-C\right)\Lt(Y_2)
  \right].
\end{aligned}
\label{eq:sequential-stage2}
\end{equation}
For $D=-C$ and
\[
  \frac{25-\sqrt{105}}{780}
  \le C\le
  \frac{25+\sqrt{105}}{780},
\]
this family is $L$-stable.  It has one midpoint solve followed by one endpoint
solve and is therefore a genuine two-active-stage method.

The Pad\'e-closed $s=2$ member in the present paper is a different method: it
is fully coupled, but it extends to every $s$ through a single proof.  The
distinction exposes a structural tradeoff.  Sequential sparsity can be
optimized at a fixed low order; order $2s$ and endpoint $L$-stability for every $s$ are
obtained here by accepting a coupled implicit stage system.

If strict lower triangularity is imposed in \eqref{eq:anchored-method}, the
first active stage can use only the anchor and itself, giving four
coefficients.  Such a two-node Hermite formula cannot be exact beyond cubic
integrands, so it cannot satisfy all $2s$ row moments when $s\ge3$.  This does
not rule out high-order sequential multiderivative methods based on different
global order conditions or deferred correction, but it rules out the present
full-row moment construction in a single lower-triangular sweep.

\section{The first four members}
\label{sec:first-four}

For illustration, take equispaced nodes $c_j=j/s$.  The exact stability
functions are listed in \cref{tab:pade-first-four}.  All four endpoint maps are
$L$-stable and satisfy $R_s(z)=e^z+\OO(z^{2s+1})$.

\begin{table}[t]
\centering
\small
\begin{tabular}{cclc}
\toprule
$s$ & order & $R_s(z)=P_s(z)/Q_s(z)$ & stiff limit\\
\midrule
1 & 2 & $\displaystyle{1}/{\frac{z^{2} - 2 z + 2}{2}}$ & $R_s(z)=\mathcal O(z^{-2})$\\[1em]
2 & 4 & $\displaystyle{\frac{z + 4}{4}}/{- \frac{z^{3} - 6 z^{2} + 18 z - 24}{24}}$ & $R_s(z)=\mathcal O(z^{-2})$\\[1em]
3 & 6 & $\displaystyle{\frac{z^{2} + 10 z + 30}{30}}/{\frac{z^{4} - 12 z^{3} + 72 z^{2} - 240 z + 360}{360}}$ & $R_s(z)=\mathcal O(z^{-2})$\\[1em]
4 & 8 & $\displaystyle{\frac{z^{3} + 18 z^{2} + 126 z + 336}{336}}/{- \frac{z^{5} - 20 z^{4} + 200 z^{3} - 1200 z^{2} + 4200 z - 6720}{6720}}$ & $R_s(z)=\mathcal O(z^{-2})$\\
\bottomrule
\end{tabular}
\caption{First four endpoint stability functions.}
\label{tab:pade-first-four}
\end{table}

For $s=1$, the coefficient arrays including the anchor column are
\[
  A_{\rm full}=\begin{pmatrix}0&1\end{pmatrix},
  \qquad
  \widehat A_{\rm full}=\begin{pmatrix}0&-\frac12\end{pmatrix}.
\]
Thus
\begin{equation}
  Y_1=Y_0+\dt\Lop(Y_1)-\frac{\dt^2}{2}\Lt(Y_1),
  \qquad u^{n+1}=Y_1,
  \label{eq:s1-method}
\end{equation}
with
\[
  R_1(z)=\frac{1}{1-z+z^2/2}=[0/2]_{e^z}.
\]

For $s=2$, choose $(c_0,c_1,c_2)=(0,1/2,1)$.  Then
\begin{equation}
  A_{\rm full}=
  \begin{pmatrix}
  -\frac5{96}&\frac{11}{12}&-\frac{35}{96}\\
  -\frac16&\frac43&-\frac16
  \end{pmatrix},
  \qquad
  \widehat A_{\rm full}=
  \begin{pmatrix}
  -\frac{11}{192}&0&\frac{17}{192}\\
  -\frac1{12}&0&\frac1{12}
  \end{pmatrix},
  \label{eq:s2-coefficients}
\end{equation}
and
\[
  R_2(z)=
  \frac{1+z/4}{1-3z/4+z^2/4-z^3/24}
  =[1/3]_{e^z}.
\]

For $s=3$, choose
$(c_0,c_1,c_2,c_3)=(0,1/3,2/3,1)$.  The exact coefficient arrays are
\begin{equation}
\begingroup
\setlength{\arraycolsep}{4pt}
  A_{\rm full}=
  \begin{pmatrix}
  \frac{532}{1215}&-\frac{53}{144}&-\frac8{45}&\frac{8579}{19440}\\[0.5em]
  \frac{946}{1215}&-\frac{43}{45}&-\frac8{45}&\frac{1241}{1215}\\[0.5em]
  \frac45&-\frac{81}{80}&0&\frac{97}{80}
  \end{pmatrix},
\endgroup
  \label{eq:s3-A-coefficients}
\end{equation}
\begin{equation}
\begingroup
\setlength{\arraycolsep}{4pt}
  \widehat A_{\rm full}=
  \begin{pmatrix}
  \frac{197}{4860}&\frac{11}{240}&-\frac{11}{60}&-\frac{185}{3888}\\[0.5em]
  \frac{98}{1215}&\frac19&-\frac49&-\frac{133}{1215}\\[0.5em]
  \frac1{12}&\frac9{80}&-\frac9{20}&-\frac{29}{240}
  \end{pmatrix}.
\endgroup
  \label{eq:s3-Ahat-coefficients}
\end{equation}
Its sixth-order stability function is
\[
  R_3(z)=
  \frac{1+z/3+z^2/30}
       {1-2z/3+z^2/5-z^3/30+z^4/360}
  =[2/4]_{e^z}.
\]

For $s=4$, choose
$(c_0,c_1,c_2,c_3,c_4)=(0,1/4,1/2,3/4,1)$.  The exact coefficient arrays are
\begin{equation}
\begingroup
\setlength{\arraycolsep}{2.2pt}
\renewcommand{\arraystretch}{1.35}
  A_{\rm full}=
  \begin{pmatrix}
  -\frac{24488549}{181923840}&\frac{33389}{2842560}&\frac{6217}{4480}&-\frac{1968661}{2842560}&-\frac{58633019}{181923840}\\
  -\frac{1354501}{1421280}&-\frac{16508}{44415}&\frac{218}{35}&-\frac{132388}{44415}&-\frac{2022731}{1421280}\\
  -\frac{8762751}{6737920}&-\frac{62409}{105280}&\frac{37341}{4480}&-\frac{401199}{105280}&-\frac{12673761}{6737920}\\
  -\frac{117097}{88830}&-\frac{27136}{44415}&\frac{296}{35}&-\frac{165376}{44415}&-\frac{160297}{88830}
  \end{pmatrix}.
\endgroup
  \label{eq:s4-A-coefficients}
\end{equation}
\begin{equation}
\begingroup
\setlength{\arraycolsep}{2.2pt}
\renewcommand{\arraystretch}{1.35}
  \widehat A_{\rm full}=
  \begin{pmatrix}
  -\frac{1622893}{121282560}&-\frac{733}{6016}&\frac{320463}{3368960}&\frac{40991}{210560}&\frac{2619139}{121282560}\\
  -\frac{66667}{947520}&-\frac{214}{423}&\frac{642}{1645}&\frac{12626}{14805}&\frac{30167}{315840}\\
  -\frac{1273701}{13475840}&-\frac{4041}{6016}&\frac{1749519}{3368960}&\frac{236163}{210560}&\frac{1701963}{13475840}\\
  -\frac{5669}{59220}&-\frac{32}{47}&\frac{864}{1645}&\frac{1888}{1645}&\frac{7397}{59220}
  \end{pmatrix}.
\endgroup
  \label{eq:s4-Ahat-coefficients}
\end{equation}
The corresponding eighth-order stability function is
\[
  R_4(z)=
  \frac{1+3z/8+3z^2/56+z^3/336}
       {1-5z/8+5z^2/28-5z^3/168+z^4/336-z^5/6720}
  =[3/5]_{e^z}.
\]

These four members make the all-$s$ construction concrete: each method uses
exactly $s$ unknown active stages, attains order $2s$, and has the
second-subdiagonal Pad\'{e} stability function with quadratic stiff decay.

\FloatBarrier
\section{Numerical verification and fixed-stage comparisons}
\label{sec:numerics}

The computations address two questions.  First, do the constructed members
actually realize $s$ active stages, order $2s$, and the predicted endpoint
stability function?  Second, does the combination of order $2s$ and
$L$-stability provide a visible advantage over the closest classical
collocation alternatives?

The accompanying generator produces all tables, figures, and CSV files.
Symbolic identities are checked with exact SymPy arithmetic.  The smooth
convergence test uses 100-decimal-digit mpmath arithmetic.  The nonlinear
stiff comparisons use full Newton iteration with analytical Jacobians and
stopping tolerance $10^{-12}$.

\subsection{Comparison methods and protocol}

The comparison uses the $s$-stage Gauss--Legendre and Radau IIA collocation
methods.  Their relevant properties are summarized in
\cref{tab:comparison-properties}.

\begin{table}[tbp]
\centering
\small
\setlength{\tabcolsep}{5pt}
\begin{tabular}{@{}lccc@{}}
\toprule
method ($s$ active stages) & order & $A$-stable & $L$-stable\\
\midrule
Gauss--Legendre & $2s$ & yes & no\\
Radau IIA & $2s-1$ & yes & yes\\
proposed method & $2s$ & yes & yes\\
\bottomrule
\end{tabular}
\caption{Properties relevant to the fixed-stage comparisons. Stability refers to the accepted one-step map.}
\label{tab:comparison-properties}
\end{table}

For every comparison, all three methods use the same number $s$ of unknown
stage states and the same number $N$ of uniform time steps.  This protocol
isolates the accuracy and stiff-damping consequences of enriching each stage
with the total-derivative information
\[
  \Lt(u)=\Lop_u(u)\Lop(u).
\]
For the nonlinear Kaps problem, total Newton iterations are also reported to
indicate the nonlinear-solve effort under the common implementation used in
the experiments.

\subsection{Exact verification of the construction}

For a $d$-dimensional ODE, the nonlinear unknown in one step is
\[
  Y=(Y_1,\ldots,Y_s)\in\mathbb R^{sd},
\]
whereas $Y_0=u^n$ is prescribed data.  Thus the formulation contains exactly
$s$ active stages.  For equispaced nodes and $s=1,\ldots,6$, exact arithmetic
verifies the basis rank, all moment equations, the polynomial representation,
the determinant identity, and the endpoint stability function.

\begin{table}[tbp]
\centering
\small
\setlength{\tabcolsep}{4pt}
\begin{tabular}{cccccc}
\toprule
$s$ & rank$(B_s)$ & moments & row identities & $\det M_s=Q_s$ & $R_s=P_s/Q_s$\\
\midrule
1 & 4/4 & yes & yes & yes & yes\\
2 & 6/6 & yes & yes & yes & yes\\
3 & 8/8 & yes & yes & yes & yes\\
4 & 10/10 & yes & yes & yes & yes\\
5 & 12/12 & yes & yes & yes & yes\\
6 & 14/14 & yes & yes & yes & yes\\
\bottomrule
\end{tabular}
\caption{Exact symbolic verification for equispaced nodes.}
\label{tab:exact-verification}
\end{table}

\subsection{Order $2s$ on a smooth nonlinear problem}

The classical order is tested on
\begin{equation}
  u_t=-u^2,
  \qquad u(0)=1,
  \qquad u(t)=\frac{1}{1+t},
  \label{eq:nonlinear-test}
\end{equation}
for which $\Lop(u)=-u^2$ and $\Lt(u)=2u^3$.  The equispaced
$s=1,2,3,4$ members are integrated to $T=1$ with
$N=4,8,16,32,64$ uniform steps.  The observed rates approach
$2,4,6,$ and $8$, respectively.

\begin{table}[tbp]
\centering
\small
\begin{tabular}{ccccc}
\toprule
$s$ & $N$ & $h$ & error & observed order\\
\midrule
1 & 4 & $1/4$ & $5.5719\times10^{-3}$ & --\\
1 & 8 & $1/8$ & $1.6348\times10^{-3}$ & 1.76905\\
1 & 16 & $1/16$ & $4.4566\times10^{-4}$ & 1.87508\\
1 & 32 & $1/32$ & $1.1655\times10^{-4}$ & 1.93499\\
1 & 64 & $1/64$ & $2.9815\times10^{-5}$ & 1.96685\\
2 & 4 & $1/4$ & $6.0109\times10^{-5}$ & --\\
2 & 8 & $1/8$ & $4.0885\times10^{-6}$ & 3.87793\\
2 & 16 & $1/16$ & $2.6666\times10^{-7}$ & 3.93851\\
2 & 32 & $1/32$ & $1.7023\times10^{-8}$ & 3.96943\\
2 & 64 & $1/64$ & $1.0752\times10^{-9}$ & 3.98480\\
3 & 4 & $1/4$ & $6.6449\times10^{-7}$ & --\\
3 & 8 & $1/8$ & $1.1383\times10^{-8}$ & 5.86728\\
3 & 16 & $1/16$ & $1.8539\times10^{-10}$ & 5.94016\\
3 & 32 & $1/32$ & $2.9531\times10^{-12}$ & 5.97218\\
3 & 64 & $1/64$ & $4.6571\times10^{-14}$ & 5.98667\\
4 & 4 & $1/4$ & $7.6269\times10^{-9}$ & --\\
4 & 8 & $1/8$ & $3.4487\times10^{-11}$ & 7.78889\\
4 & 16 & $1/16$ & $1.4204\times10^{-13}$ & 7.92361\\
4 & 32 & $1/32$ & $5.6630\times10^{-16}$ & 7.97053\\
4 & 64 & $1/64$ & $2.2312\times10^{-18}$ & 7.98758\\
\bottomrule
\end{tabular}
\caption{High-precision convergence for $u_t=-u^2$ at $T=1$.}
\label{tab:nonlinear-convergence}
\end{table}

\begin{figure}[tbp]
  \centering
  \includegraphics[width=0.6\textwidth]{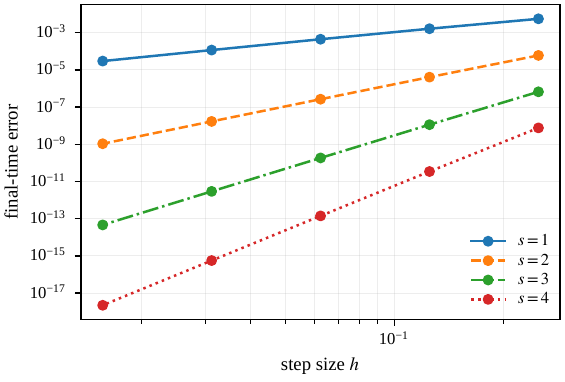}
  \caption{High-precision convergence for \eqref{eq:nonlinear-test}.  The
  asymptotic slopes approach $2s$ for $s=1,2,3,4$.}
  \label{fig:nonlinear-convergence}
\end{figure}

\subsection{$L$-stability diagnostics}

The stability theorem is analytical: the exact identity
\[
  R_s(z)=[s-1/s+1]_{e^z}
\]
and the classical Pad\'{e} theorem verify the $A$-stability part of the
standard definition, while $\deg Q_s-\deg P_s=2$ gives
$R_s(z)\to0$ as $|z|\to\infty$ in the left half-plane.  Together,
these properties establish $L$-stability.  The following computations diagnose
the implementation rather than replace the proof.  For $s=1,\ldots,6$, all
computed roots of $Q_s$ lie in the open right half-plane, a dense
imaginary-axis scan gives $\max|R_s(iy)|=1$ to roundoff, and the negative-axis
values exhibit the predicted quadratic decay.

\begin{table}[tbp]
\centering
\small
\begin{tabular}{ccccc}
\toprule
$s$ & $\min\Re\{Q_s=0\}$ & sampled $\max|R_s(iy)|$ & excess over one & $\lim x^2|R_s(-x)|$\\
\midrule
1 & 1.00000000 & 1.0000000000000000 & 0.00e+00 & 2\\
2 & 1.68709159 & 1.0000000000000000 & 0.00e+00 & 6\\
3 & 2.22098003 & 1.0000000000000002 & 2.22e-16 & 12\\
4 & 2.66473152 & 1.0000000000000002 & 2.22e-16 & 20\\
5 & 3.04820604 & 1.0000000000000002 & 2.22e-16 & 30\\
6 & 3.38811794 & 1.0000000000000002 & 2.22e-16 & 42\\
\bottomrule
\end{tabular}
\caption{Endpoint stability diagnostics.}
\label{tab:stability-diagnostics}
\end{table}

\begin{figure}[tbp]
  \centering
  \begin{subfigure}[t]{0.49\textwidth}
    \centering
    \includegraphics[width=\linewidth]{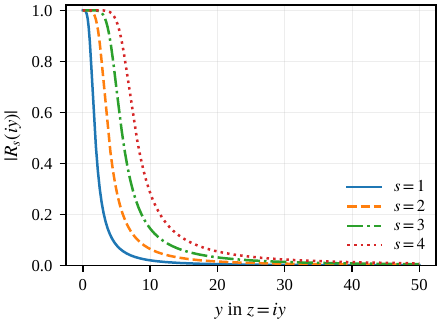}
    \caption{Endpoint modulus on the imaginary axis for $s=1,2,3,4$.}
    \label{fig:imag-axis}
  \end{subfigure}\hfill
  \begin{subfigure}[t]{0.49\textwidth}
    \centering
    \includegraphics[width=\linewidth]{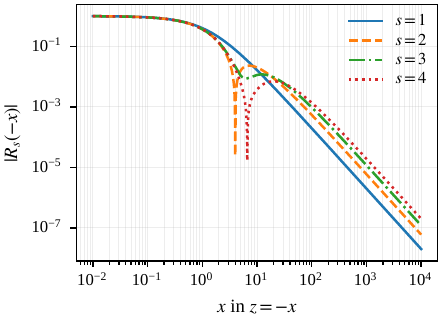}
    \caption{Endpoint decay on the negative real axis; each curve tends to
    zero as $\mathcal O(x^{-2})$.}
    \label{fig:negative-axis}
  \end{subfigure}
  \caption{Endpoint stability diagnostics for the first four members.}
  \label{fig:endpoint-stability-diagnostics}
\end{figure}

\subsection{A two-mode stiff decay problem}

Consider the diagonal system
\begin{equation}
  y'(t)=
  \begin{pmatrix}-1&0\\0&-1000\end{pmatrix}y(t),
  \qquad y(0)=\begin{pmatrix}1\\1\end{pmatrix},
  \qquad 0\le t\le1.
  \label{eq:two-mode}
\end{equation}
The slow component tests classical accuracy, while the fast component tests
stiff damping.  Gauss collocation has the same order $2s$ as the proposed
method, but its stability function tends to a nonzero value in magnitude as
$z\to-\infty$.  Radau IIA damps the fast component, but its classical order at
the same stage count is $2s-1$.

\begin{table}[tbp]
\centering
\small
\begin{tabular}{cclrr}
\toprule
$s$ & $N$ & method & total error & $|y_{\rm fast}(1)|$\\
\midrule
2 & 1 & Proposed & $5.325\times10^{-4}$ & $5.940\times10^{-6}$\\
2 & 1 & Gauss & $9.881\times10^{-1}$ & $9.881\times10^{-1}$\\
2 & 1 & Radau IIA & $4.243\times10^{-3}$ & $1.986\times10^{-3}$\\
\addlinespace
2 & 2 & Proposed & $3.938\times10^{-5}$ & $5.534\times10^{-10}$\\
2 & 2 & Gauss & $9.531\times10^{-1}$ & $9.531\times10^{-1}$\\
2 & 2 & Radau IIA & $5.700\times10^{-4}$ & $1.556\times10^{-5}$\\
\addlinespace
2 & 4 & Proposed & $2.706\times10^{-6}$ & $7.234\times10^{-17}$\\
2 & 4 & Gauss & $8.253\times10^{-1}$ & $8.253\times10^{-1}$\\
2 & 4 & Radau IIA & $7.505\times10^{-5}$ & $3.661\times10^{-9}$\\
\addlinespace
2 & 8 & Proposed & $1.777\times10^{-7}$ & $2.483\times10^{-28}$\\
2 & 8 & Gauss & $4.639\times10^{-1}$ & $4.639\times10^{-1}$\\
2 & 8 & Radau IIA & $9.664\times10^{-6}$ & $2.740\times10^{-15}$\\
\addlinespace
3 & 1 & Proposed & $1.174\times10^{-5}$ & $1.174\times10^{-5}$\\
3 & 1 & Gauss & $9.763\times10^{-1}$ & $9.763\times10^{-1}$\\
3 & 1 & Radau IIA & $2.949\times10^{-3}$ & $2.949\times10^{-3}$\\
\addlinespace
3 & 2 & Proposed & $6.631\times10^{-8}$ & $2.110\times10^{-9}$\\
3 & 2 & Gauss & $9.085\times10^{-1}$ & $9.085\times10^{-1}$\\
3 & 2 & Radau IIA & $3.363\times10^{-5}$ & $3.363\times10^{-5}$\\
\addlinespace
3 & 4 & Proposed & $1.107\times10^{-9}$ & $9.545\times10^{-16}$\\
3 & 4 & Gauss & $6.811\times10^{-1}$ & $6.811\times10^{-1}$\\
3 & 4 & Radau IIA & $4.794\times10^{-8}$ & $1.579\times10^{-8}$\\
\addlinespace
3 & 8 & Proposed & $1.791\times10^{-11}$ & $2.931\times10^{-26}$\\
3 & 8 & Gauss & $2.153\times10^{-1}$ & $2.153\times10^{-1}$\\
3 & 8 & Radau IIA & $1.527\times10^{-9}$ & $3.694\times10^{-14}$\\
\addlinespace
\bottomrule
\end{tabular}
\caption{Two-mode stiff decay with eigenvalues $-1$ and $-1000$. The exact fast component at $T=1$ is negligible.}
\label{tab:two-mode-comparison}
\end{table}

The proposed method suppresses the fast mode almost immediately while
retaining the higher slow-mode accuracy.  For example, with $s=3$ and only
two time steps, its endpoint error is below $7\times10^{-8}$, compared with
approximately $9\times10^{-1}$ for Gauss and $4\times10^{-5}$ for Radau IIA.

\begin{figure}[tbp]
  \centering
  \includegraphics[width=0.6\textwidth]{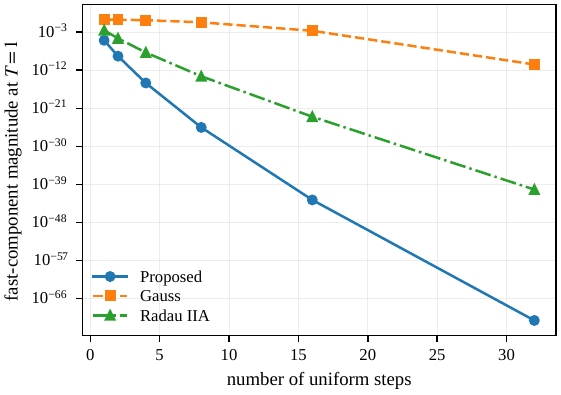}
  \caption{Fast-component damping for the $s=3$ methods applied to
  \eqref{eq:two-mode}.}
  \label{fig:two-mode-decay}
\end{figure}

\subsection{The nonlinear Kaps stiff problem}

A more demanding nonlinear comparison uses the classical Kaps problem
\cite{KapsRentrop1979}:
\begin{equation}
\begin{aligned}
  \varepsilon y_1'&=-(1+2\varepsilon)y_1+y_2^2,\\
  y_2'&=y_1-y_2-y_2^2,
\end{aligned}
\qquad
  y_1(0)=y_2(0)=1,
\label{eq:kaps}
\end{equation}
whose exact solution is
\[
  y_1(t)=e^{-2t},
  \qquad y_2(t)=e^{-t}.
\]
The parameter is set to $\varepsilon=10^{-6}$ and the integration interval is
$[0,1]$.  All methods use analytical Jacobians and full Newton iteration.

\begin{table}[tbp]
\centering
\small
\begin{tabular}{clrr}
\toprule
$N$ & method & error at $T=1$ & total Newton iterations\\
\midrule
4 & Proposed & $2.706\times10^{-6}$ & 14\\
4 & Gauss & $4.481\times10^{-3}$ & 12\\
4 & Radau IIA & $7.505\times10^{-5}$ & 12\\
\addlinespace
8 & Proposed & $1.778\times10^{-7}$ & 29\\
8 & Gauss & $1.124\times10^{-3}$ & 24\\
8 & Radau IIA & $9.664\times10^{-6}$ & 24\\
\addlinespace
16 & Proposed & $1.140\times10^{-8}$ & 58\\
16 & Gauss & $2.808\times10^{-4}$ & 48\\
16 & Radau IIA & $1.227\times10^{-6}$ & 48\\
\addlinespace
32 & Proposed & $7.215\times10^{-10}$ & 105\\
32 & Gauss & $6.980\times10^{-5}$ & 96\\
32 & Radau IIA & $1.546\times10^{-7}$ & 96\\
\addlinespace
\bottomrule
\end{tabular}
\caption{Kaps problem with $\varepsilon=10^{-6}$ and $s=2$ active stages. Methods use the same number of uniform steps.}
\label{tab:kaps-comparison-s2}
\end{table}
\begin{table}[tbp]
\centering
\small
\begin{tabular}{clrr}
\toprule
$N$ & method & error at $T=1$ & total Newton iterations\\
\midrule
4 & Proposed & $1.108\times10^{-9}$ & 17\\
4 & Gauss & $4.790\times10^{-5}$ & 12\\
4 & Radau IIA & $4.794\times10^{-8}$ & 12\\
\addlinespace
8 & Proposed & $1.791\times10^{-11}$ & 32\\
8 & Gauss & $3.052\times10^{-6}$ & 24\\
8 & Radau IIA & $1.527\times10^{-9}$ & 24\\
\addlinespace
16 & Proposed & $2.278\times10^{-13}$ & 64\\
16 & Gauss & $1.907\times10^{-7}$ & 48\\
16 & Radau IIA & $4.822\times10^{-11}$ & 48\\
\addlinespace
32 & Proposed & $3.308\times10^{-14}$ & 125\\
32 & Gauss & $1.168\times10^{-8}$ & 96\\
32 & Radau IIA & $1.654\times10^{-12}$ & 96\\
\addlinespace
\bottomrule
\end{tabular}
\caption{Kaps problem with $\varepsilon=10^{-6}$ and $s=3$ active stages. Methods use the same number of uniform steps.}
\label{tab:kaps-comparison-s3}
\end{table}

At equal active-stage count and equal step count, the proposed method is
consistently more accurate in the displayed regime.  For $s=2$ and $N=16$,
its error is about $1.1\times10^{-8}$, compared with
$2.8\times10^{-4}$ for Gauss and $1.2\times10^{-6}$ for Radau IIA.  The total
Newton counts are comparable.  For $s=3$ and $N=8$, the errors are about
$1.8\times10^{-11}$, $3.1\times10^{-6}$, and $1.5\times10^{-9}$,
respectively.

\begin{figure}[tbp]
  \centering
  \begin{subfigure}[t]{0.49\textwidth}
    \centering
    \includegraphics[width=\linewidth]{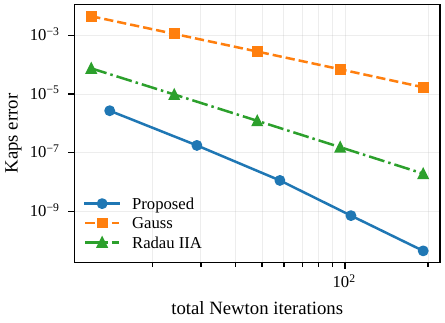}
    \caption{$s=2$.}
    \label{fig:kaps-s2}
  \end{subfigure}\hfill
  \begin{subfigure}[t]{0.49\textwidth}
    \centering
    \includegraphics[width=\linewidth]{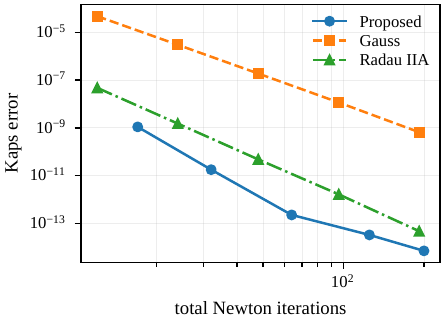}
    \caption{$s=3$.}
    \label{fig:kaps-s3}
  \end{subfigure}
  \caption{Error versus total Newton iterations for the Kaps problem.}
  \label{fig:kaps-work-precision}
\end{figure}

\subsection{A semidiscrete heat equation}

Finally, consider the heat equation
\[
  u_t=u_{xx},
  \qquad 0<x<1,
  \qquad u(t,0)=u(t,1)=0,
\]
discretized by centered second differences at 40 interior points.  The
initial grid vector is
\[
  u_i(0)=\sin(\pi x_i)+0.1\sin(40\pi x_i),
  \qquad x_i=\frac{i}{41}.
\]
The first term is a slowly decaying smooth mode, while the second is a highly
damped grid-scale mode.  The exact semidiscrete solution at $T=0.1$ is
computed by diagonalizing the symmetric discrete Laplacian.

\begin{table}[tbp]
\centering
\small
\begin{tabular}{clrrr}
\toprule
$N$ & method & error & ratio vs. Gauss & ratio vs. Radau\\
\midrule
1 & Proposed & $5.073\times10^{-4}$ & 0.00514 & 0.116\\
1 & Gauss & $9.867\times10^{-2}$ & 1 & 22.6\\
1 & Radau IIA & $4.370\times10^{-3}$ & 0.0443 & 1\\
\addlinespace
2 & Proposed & $3.736\times10^{-5}$ & 0.000401 & 0.0678\\
2 & Gauss & $9.306\times10^{-2}$ & 1 & 169\\
2 & Radau IIA & $5.508\times10^{-4}$ & 0.00592 & 1\\
\addlinespace
4 & Proposed & $2.564\times10^{-6}$ & 3.41e-05 & 0.0356\\
4 & Gauss & $7.508\times10^{-2}$ & 1 & 1.04e+03\\
4 & Radau IIA & $7.204\times10^{-5}$ & 0.00096 & 1\\
\addlinespace
8 & Proposed & $1.683\times10^{-7}$ & 5.29e-06 & 0.0181\\
8 & Gauss & $3.184\times10^{-2}$ & 1 & 3.43e+03\\
8 & Radau IIA & $9.273\times10^{-6}$ & 0.000291 & 1\\
\addlinespace
16 & Proposed & $1.079\times10^{-8}$ & 1.05e-05 & 0.00916\\
16 & Gauss & $1.029\times10^{-3}$ & 1 & 874\\
16 & Radau IIA & $1.177\times10^{-6}$ & 0.00114 & 1\\
\addlinespace
\bottomrule
\end{tabular}
\caption{Semidiscrete heat equation with 40 interior unknowns and $s=2$ active stages. Ratios below one favor the listed method.}
\label{tab:heat-comparison-s2}
\end{table}
\begin{table}[tbp]
\centering
\small
\begin{tabular}{clrrr}
\toprule
$N$ & method & error & ratio vs. Gauss & ratio vs. Radau\\
\midrule
1 & Proposed & $6.056\times10^{-6}$ & 6.28e-05 & 0.0127\\
1 & Gauss & $9.642\times10^{-2}$ & 1 & 202\\
1 & Radau IIA & $4.775\times10^{-4}$ & 0.00495 & 1\\
\addlinespace
2 & Proposed & $6.216\times10^{-8}$ & 7.18e-07 & 0.00723\\
2 & Gauss & $8.661\times10^{-2}$ & 1 & 1.01e+04\\
2 & Radau IIA & $8.595\times10^{-6}$ & 9.92e-05 & 1\\
\addlinespace
4 & Proposed & $1.020\times10^{-9}$ & 1.81e-08 & 0.0198\\
4 & Gauss & $5.640\times10^{-2}$ & 1 & 1.09e+06\\
4 & Radau IIA & $5.156\times10^{-8}$ & 9.14e-07 & 1\\
\addlinespace
8 & Proposed & $1.650\times10^{-11}$ & 1.63e-09 & 0.0116\\
8 & Gauss & $1.015\times10^{-2}$ & 1 & 7.12e+06\\
8 & Radau IIA & $1.426\times10^{-9}$ & 1.41e-07 & 1\\
\addlinespace
16 & Proposed & $2.621\times10^{-13}$ & 2.45e-08 & 0.00582\\
16 & Gauss & $1.072\times10^{-5}$ & 1 & 2.38e+05\\
16 & Radau IIA & $4.501\times10^{-11}$ & 4.2e-06 & 1\\
\addlinespace
\bottomrule
\end{tabular}
\caption{Semidiscrete heat equation with 40 interior unknowns and $s=3$ active stages. Ratios below one favor the listed method.}
\label{tab:heat-comparison-s3}
\end{table}

This example displays the intended combination particularly clearly.  At
coarse and moderate step sizes, Gauss retains too much of the high-frequency
mode, while Radau IIA damps it but carries one order less at the same stage
count.  The proposed method damps the grid-scale component and preserves the
$2s$-th-order slow-mode accuracy.  For $s=3$ and $N=4$, its error is about
$1.0\times10^{-9}$, versus $5.6\times10^{-2}$ for Gauss and
$5.2\times10^{-8}$ for Radau IIA.

\begin{figure}[tbp]
  \centering
  \begin{subfigure}[t]{0.49\textwidth}
    \centering
    \includegraphics[width=\linewidth]{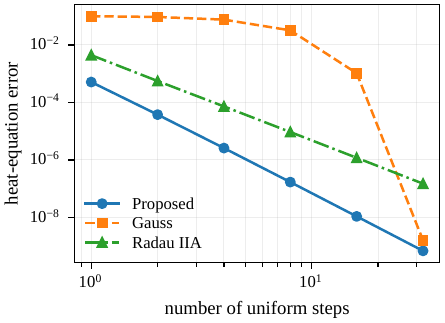}
    \caption{$s=2$.}
    \label{fig:heat-s2}
  \end{subfigure}\hfill
  \begin{subfigure}[t]{0.49\textwidth}
    \centering
    \includegraphics[width=\linewidth]{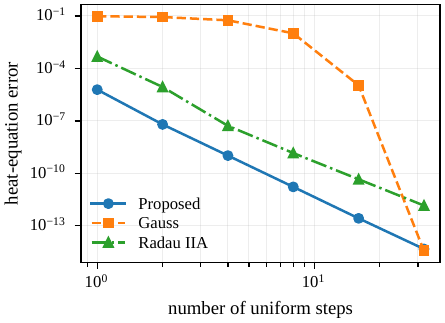}
    \caption{$s=3$.}
    \label{fig:heat-s3}
  \end{subfigure}
  \caption{Fixed-stage comparisons for the semidiscrete heat equation.}
  \label{fig:heat-comparisons}
\end{figure}

\subsection{Interpretation of the comparisons}

The experiments illustrate the principal contribution of the family.
At a fixed active-stage count, Gauss supplies order $2s$ without stiff decay,
whereas Radau IIA supplies stiff decay with order $2s-1$.  The proposed
anchored two-derivative construction supplies both order $2s$ and endpoint
$L$-stability.  On the displayed problems this produces a broad interval of
step sizes in which its endpoint error is lower than that of both classical
comparators.

The advantage is particularly relevant when total-derivative information is
available through Jacobian--vector products, automatic differentiation, or
Lax--Wendroff-type formulations.  In these settings, the construction offers
a compact route to order $2s$ and $L$-stability with only $s$ unknown stage
states.

\section{Discussion and practical considerations}
\label{sec:discussion}

The principal contribution is the simultaneous realization of three
properties for every positive integer $s$: exactly $s$ active unknown stages,
global order $2s$, and the second-subdiagonal Pad\'{e} endpoint stability
function.  The value and total-derivative information at each node supplies
two Hermite data channels while the nonlinear unknown remains one state per
active node; this is the sense in which the formulation is compact.  The
determinant identity additionally gives a transparent common
denominator for the coupled stage system and excludes hidden poles.  Because
the endpoint stability function is independent of the node locations, the
nodes may be chosen for conditioning, physical stage placement, or nonlinear
solution efficiency rather than for scalar stability optimization.

The coupled stage equations have a natural block structure that is compatible
with Newton--Krylov iteration, Jacobian reuse, and block preconditioning.
For high-order coefficient generation, exact or arbitrary-precision arithmetic
and scaled polynomial bases provide reliable alternatives to an unscaled
monomial solve.  The node freedom established by the theory can also be used
to improve conditioning and nonlinear-solver behavior without changing the
stability function.

As usual, $L$-stability refers to the accepted one-step stability function.
The internal transfer functions remain available explicitly as
$G_j(z)=\Pi_j(z)/Q_s(z)$, providing a direct basis for future optimization of
stage amplification, conditioning, and stiffness-uniform performance.

\section{Conclusions}
\label{sec:conclusions}

An arbitrary-order family of implicit anchored two-derivative methods has been
constructed.  The known value $Y_0=u^n$ acts as a Hermite anchor and the method
solves for exactly $s$ active stage states.  Imposing $2s$ Hermite moments gives
global order $2s$ and leaves two coefficients per row.  A Pad\'{e}--Hermite
basis theorem determines these coefficients uniquely for every ordered real
node set.

The central identities are
\[
  \det(I-zA-z^2\widehat A)=Q_s(z),
  \qquad
  R_s(z)=\frac{P_s(z)}{Q_s(z)}=[s-1/s+1]_{e^z}.
\]
They show that the stage realization has no hidden poles and that the accepted
one-step map is $L$-stable (and therefore $A$-stable) for every positive
integer $s$.  Thus the family combines
\[
  \boxed{
  s\text{ active stages},\qquad
  \text{order }2s,\qquad
  L\text{-stability for every }s.
  }
\]

Exact symbolic checks through $s=6$ verify the complete algebraic
construction.  High-precision nonlinear tests confirm orders $2,4,6,$ and $8$
for $s=1,2,3,4$, while endpoint stability diagnostics reproduce the
imaginary-axis bound and stiff decay predicted by the Pad\'{e} theorem.  The
result is a constructive and node-flexible arbitrary-order family whose main
accuracy and stability properties are available for every member of the
sequence.

The explicit common-denominator structure and freedom in the node locations
open several directions for further development, including optimized block
solvers, high-order node design, stage-amplification control, adaptive time
stepping, and stiffness-uniform analysis for semidiscrete partial differential
equations.

\section*{Acknowledgements}
This work was supported by the Natural Science Foundation of the Department
of Education of Henan Province (Grant No.~26A110007), the Department of
Science and Technology of Henan Province (Grant No.~252300423500), and the
Doctoral Foundation of Henan Polytechnic University (Grant No.~B2024-60).
The author is the principal investigator of all three projects and thanks the
High-Performance Computing Center of Henan Polytechnic University for
providing computational resources.

\end{document}